 \let\mathscr\relax
\DeclareMathOperator{\vol}{Vol}
\DeclareMathOperator{\Diam}{Diam}
\DeclareMathOperator{\dist}{Dist}
\newtheorem{thm}{Theorem}[section]
\newtheorem{prop}[thm]{Proposition}
\newtheorem{lem}[thm]{Lemma}
\newtheorem{claim}[thm]{Claim}
\theoremstyle{definition}
\theoremstyle{remark}
\numberwithin{equation}{section}
\author{Andrea Sartori}
\address{ Departement of Mathematics, Tel Aviv University,  IL }
\email{sartori.andrea.math@gmail.com}
\begin{document}
\title{On the universality of the Nazarov-Sodin constant}

\maketitle
\begin{abstract}
We study the number of connected components of non-Gaussian random spherical harmonics on the two dimensional sphere $\mathbb{S}^2$. We prove that the expectation of the nodal domains count is independent of the distribution of the coefficients provided it  has a finite second moment. \end{abstract}
\section{Introduction}

\subsection{Nodal domains of Laplace eigenfunctions}
Let $(M,g)$ be a smooth, compact, connected surface and let $\Delta_g$ be the associated Laplace-Beltrami operators. We are interested in the eigenvalue problem
\begin{align}
	\nonumber	\Delta_g f_{\lambda}+ \lambda f_{\lambda}=0. 
\end{align}
Since $M$ is compact, the spectrum of $-\Delta_g$ is a discrete subset of $\mathbb{R}$ with only accumulation point at $+\infty$. The eigenfunctions $f_{\lambda}$ are smooth and their nodal set, that is their zero set, is a smooth $1d$ sub-manifold outside a finite set of points \cite{C76}. In particular, it is possible to define the nodal domains counting function
$$ \mathcal{N}(f_{\lambda}):= \hspace{3mm} \text{number of connected components of} \hspace{3mm} \{x\in M: f_{\lambda}(x)=0\}.$$

\vspace{2mm}

Courant's nodal domains theorem \cite[Page 21]{Cbook} asserts that there exists some constant $C=C(M)>0$ such that
$$ \mathcal{N}(f_{\lambda})\leq C \lambda.$$
On the other hand, it is not possible to obtain any non-trivial lower  bound for  $\mathcal{N}(\cdot)$. Lewis \cite{L77} showed that there exists a sequence of eigenfunction on the two dimension sphere $\mathbb{S}^2$ with $\mathcal{N}(f_{\lambda})\leq 3$ and $\lambda \rightarrow \infty$. Nevertheless, much of our physical understanding of Laplace eigenfunctions, such as the RWM proposed by Berry \cite{B1,B2,B02} and the percolation prediction of Bogomolny and Schmit \cite{BS2002,BS07}, suggests that, for \textquotedblleft generic\textquotedblright Laplace eigenfunctions, we should expect  
$$ \mathcal{N}(f_{\lambda})\geq c \lambda,$$
for some $c= c(M)$. 

\vspace{2mm}
In order to explore this speculation, Nazarov and Sodin \cite{NS09} studied the number of nodal domains of random Laplace eigenfunctions on $\mathbb{S}^2$. The eigenvalues on $\mathbb{S}^2$ are given by $\lambda=n(n+1)$ for any integer $n>0$ and have multiplicity $2n+1$, thus it is possible to define random Laplace eigenfunctions on $\mathbb{S}^2$ as 
\begin{align}
	\label{function 0}
	f_n(x)= \frac{1}{\sqrt{2n+1}}\sum_{k=-n}^n a_k Y_k(x),
\end{align}
where $a_k$ are i.i.d standard Gaussian random variables and the $Y_k$'s are an orthonormal base of the spherical harmonics of degree $n$, that is Laplace eigenfunction with eigenvalue  $\lambda=n(n+1)$. In this setting,  Nazarov and Sodin \cite{NS09} found that 
$$\mathbb{E}[\mathcal{N}(f)]= c_{NS}n^2(1+o_{n\rightarrow \infty}(1)),$$
where $c_{NS}>0$ is the \textit{Nazarov-Sodin} constant, in agreement with the prediction of  Bogomolny and Schmit.

\vspace{2mm}

 The purpose of this note is to explore  what happens to the expected number of nodal domains when the $a_k$'s in \eqref{function 0} are not Gaussian random variables but i.i.d. with finite second moment. In particular, we are interested in the case when $a_i$ are Bernulli $\pm 1$ random variables. Since (normalized) Laplace eigenfunctions, such as the $Y_k$ in \eqref{function 0}, are defined up to sign, the $\pm 1$ case seems to be a very natural model to study \textquotedblleft generic\textquotedblright Laplace eigenfunctions. 

\subsection{Statement of the main result}
\label{main theorem}
 Given some integer $n>0$, let $\mathcal{H}_n= \mathcal{H}_n(\mathbb{S}^2)\subset L^2(\mathbb{S}^2)$ be the space of spherical harmonics of degree $n$ on $\mathbb{S}^2$, that is the restriction of homogeneous harmonic polynomials of degree $n$ to $\mathbb{S}^2$.  Given an $L^2$ orthonormal basis $\{Y_k\}_{-n\leq k \leq n}$ for $\mathcal{H}_n$, we consider the function
\begin{align}
	\label{function}
	f_{n}(x)= c_n\sum_{k=-n}^n a_{k}Y_{k}(x),
\end{align}
where
\begin{align}
	&\mathbb{E}[a_k]=0 & \mathbb{E}[|a_k|^2]=1 \label{assumptions}
\end{align}
and $c_n= (2n+1)^{-1/2}$ is a normalising constant, which has no impact on the zero set, so that $ \mathbb{E}[|f_{n}|^2]=1.$ With the above notation, we prove the following result: 
\begin{thm}
\label{thm: main}
Let $f_n$ be as in \eqref{function}. Suppose that the $a_k$'s are i.i.d. random variables such that $\mathbb{E}[|a_0|^2]<\infty$, then 
$$\mathbb{E}[\mathcal{N}(f_n)]=c_{NS}n^2(1+o_{n\rightarrow \infty}(1)).$$
\end{thm}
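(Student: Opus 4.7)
The strategy is to combine the semi-local reduction of \cite{NS09} with a universality argument (a central limit theorem) for the rescaled local field. The Nazarov-Sodin semi-local sandwich gives, for any large fixed $R$,
\[
\frac{\mathbb{E}[\mathcal{N}(f_n)]}{n^2} = \frac{1}{\pi R^2}\int_{\mathbb{S}^2}\mathbb{E}\bigl[N(F_{n,x}; B_R)\bigr]\,dx + o_R(1)+o_{n\to\infty}(1),
\]
where $F_{n,x}(y):=f_n(\exp_x(y/n))$ is the rescaling of $f_n$ to a Euclidean ball $B_R\subset \mathbb{R}^2$ and $N(g;B_R)$ is the number of nodal components of $g$ wholly contained in $B_R$. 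In the Gaussian setting \cite{NS09}, the integrand converges to $\mathbb{E}[N(F;B_R)]$ for the Berry planar random wave $F$, and $\mathbb{E}[N(F;B_R)]/(\pi R^2)\to c_{NS}$ as $R\to\infty$; it therefore suffices to exhibit the same local limit in the non-Gaussian setting.

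The second step is to establish $F_{n,x}\Rightarrow F$ in $C^k(B_R)$ for some $k\ge 2$, outside a controlled exceptional set. The covariance kernel of $F_{n,x}$ is basis-independent (by the addition formula) and converges to that of $F$. Writing $F_{n,x}(y)=\sum_k a_k\,c_n Y_k(\exp_x(y/n))$ as a sum of i.i.d.\ contributions, finite-dimensional convergence follows from the multivariate Lindeberg CLT once $\max_k c_n^2 Y_k(x')^2 \to 0$ uniformly on $B(x,R/n)$, i.e.\ for $x$ outside an exceptional set $\mathcal{B}_n$ where some basis element $Y_k$ concentrates near $x$. Tightness in $C^k(B_R)$ follows from Bernstein-type derivative estimates for spherical harmonics combined with the hypothesis $\mathbb{E}|a_k|^2=1$, and the nodal count is a generically continuous functional of the $C^1$-limit, so distributional convergence plus uniform integrability upgrade to $\mathbb{E}[N(F_{n,x};B_R)]\to \mathbb{E}[N(F;B_R)]$ for $x\notin \mathcal{B}_n$.

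Uniform integrability of $N(F_{n,x};B_R)$ under only a second-moment assumption is delicate, and I would address it by truncating $a_k\mapsto \tilde a_k = a_k\mathbf{1}_{|a_k|\le M}$: the truncated field has all moments bounded, hence admits Kac-Rice bounds for its critical points and therefore for $N$; the $L^2$-error from the truncation vanishes as $M\to\infty$, and a deterministic $C^1$-stability lemma for nodal components (outside a small anti-concentration event controlling near-critical zeros) transfers this to convergence of expectations. This also yields a basis-independent uniform bound $\sup_x\mathbb{E}[N(F_{n,x};B_R)]\le C(R)$, to be used in the last step.

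The principal obstacle is to handle the exceptional set $\mathcal{B}_n$: for concentrated bases (e.g.\ at the poles for the standard basis $\{Y_n^m\}$), $\mathcal{B}_n$ can have positive measure, since the identity $\sum_k c_n^2 Y_k(x)^2=1$ forces some $Y_k$ to carry a non-vanishing share of the variance wherever localization is extreme. For $x\in \mathcal{B}_n$ I would use a frequency-decomposition trick, splitting $f_n$ near $x$ into the contribution of the $O(1)$ basis elements carrying non-negligible variance and a remainder to which the CLT applies, then conditioning on the former and verifying the Berry-wave limit persists. Combined with the uniform bound from the previous step, dominated convergence gives $\int_{\mathbb{S}^2}\mathbb{E}[N(F_{n,x};B_R)]\,dx\to \pi R^2 c_{NS}$ as $n\to\infty$ and then $R\to\infty$, yielding $\mathbb{E}[\mathcal{N}(f_n)] = c_{NS} n^2(1+o(1))$.
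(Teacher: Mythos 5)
Your overall architecture matches the paper's: semi-local sandwich, Lindeberg CLT for the rescaled field, continuity of the nodal count under $C^1$-perturbation, and an upgrade from distributional convergence to convergence of expectations. However, two of your steps contain genuine problems. First, the uniform integrability. You propose truncating the coefficients and invoking Kac--Rice bounds for critical points of the truncated field. For the motivating case of Bernoulli $\pm 1$ coefficients the truncated field is still a finite sum with discrete coefficients, so the field values need not have densities and Kac--Rice is not available; this route is at best very delicate. The paper sidesteps all of this with a purely deterministic observation: by the Faber--Krahn inequality every nodal domain of a degree-$n$ spherical harmonic has inner radius $\gtrsim n^{-1}$, so the number of nodal domains contained in a ball of radius $R/n$ is $O(R^2)$ almost surely. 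Thus $\mathcal{N}(F_x)$ is uniformly bounded, $\int \mathcal{N}(F_x)^2\,d\sigma \lesssim R^4$, and Lemma \ref{lem: uniform integrability} applies with no truncation at all.

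Second, your treatment of the exceptional set $\mathcal{B}_n$ is the more serious issue. You propose, for $x\in\mathcal{B}_n$, to condition on the $O(1)$ basis elements carrying macroscopic variance and to ``verify the Berry-wave limit persists.'' It does not: at the north pole with the standard basis one has $F_{n,x}(0)=a_0$ exactly, so for Bernoulli coefficients the local field at such points converges to something genuinely non-Gaussian, and no conditioning argument can recover the planar random wave there. The correct (and simpler) resolution is quantitative: by the $L^4$-bound $\int_{\mathbb{S}^2}|Y_k|^4\,d\rho \lesssim n^{2/3}\log n$ (Lemma \ref{lem: bound on L^4-norm}), Chebyshev, the elliptic sup-norm estimate of Lemma \ref{lem: bounds}, and a union bound over the $2n+1$ basis elements, the set where some $|Y_k|\gtrsim K^{-1}n^{1/2}$ on $B(x,R/n)$ has measure $O(K^4R^{10}n^{-1/3}\log n)=o(1)$ for any orthonormal basis. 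Since the convergence is formulated on the product space $\Omega\times\mathbb{S}^2$ (equivalently, since you only need the spatial average $\int_{\mathbb{S}^2}\mathbb{E}[N(F_{n,x};B_R)]\,d\rho(x)$ together with the uniform $O(R^2)$ bound above), a bad set of vanishing measure contributes nothing and can simply be discarded. Note also that the crude $L^2$ normalization alone does not suffice to beat the union bound over $k$; the $L^4$ input is where the real work is, and it is the ingredient your sketch is missing.
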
 
One crucial difference between the Gaussian and the non-Gaussian case is that the distribution of $f_n$ as in \eqref{function} is \textit{not} base independent (see Claim \ref{claim: not-base indepdent} below). Therefore, most of the Gaussian tools which are pivotal to the \textquotedblleft barrier method\textquotedblright in \cite{NS09} are not available in the non-Gaussian case. Our proof of Theorem \ref{thm: main} takes a different approach which we will now briefly describe. 

\vspace{2mm}
The proof of Theorem \ref{thm: main} comprises essentially of two steps:
\begin{enumerate}
\item The first, and main, step is to show the universality, that is independence from the law of the $a_k$'s in \eqref{function},  for the local nodal domains counting function 
$$\hspace{10mm}\mathcal{N}(F_x)=\text{number of connected components of} \hspace{3mm} \{y\in B(x,O(n^{-1})): f_{\lambda}(y)=0\},$$
where $B(x,O(n^{-1}))$ is a ball centered at $x\in \mathbb{S}^2$ of radius about (we are purposely being slightly vague here) $n^{-1}$. This step requires the most care and its proof is essentially split into three parts:
\begin{enumerate}
\item We show that, for most points $x\in \mathbb{S}^2$, each of the summands in \eqref{function} is $o(n^{1/2})$. Here we use some quite \textquotedblleft elementary\textquotedblright bounds on the $L^4$-norm of spherical harmonics. 
\item We apply a Lindeberg-type CLT to $f_n$ to deduce its (asymptotic) Gaussian behavior outside the aforementioned \textquotedblleft bad\textquotedblright set of $x\in \mathbb{S}^2$.
\item We conclude, using the stability of the nodal set under small perturbations as in \cite{NS}, that 
$$\mathcal{N}(F_x)\overset{d}{\longrightarrow} \mathcal{N}(F_{\mu}) \hspace{5mm} n\rightarrow \infty,$$
where $F_{\mu}$ are Gaussian random waves on the plane and the convergence is in distribution with respect to the product space $\Omega\times \mathbb{S}^2$ (with $\Omega$ being the probability space where the random objects are defined).
\end{enumerate}
\item In the second step, we will use, as in \cite{NS}, the semi-local property of the nodal domains counting functions 
$$\mathcal{N}(f_n)= n (1+o(1))\int_{\mathbb{S}^2} \mathcal{N}(F_x) d\rho(x),$$
where $\rho(\cdot)$ is the uniform measure on $\mathbb{S}^2$, to \textquotedblleft reconstruct\textquotedblright $\mathcal{N}(f_n)$ from our understanding of  $\mathcal{N}(F_x)$ and  conclude the proof.  This step uses some deterministic bounds on the nodal length, i.e. volume of the zero set, of $f_n$ and the Faber-Krahn inequality
\end{enumerate} 

We point out that the universality of the local nodal volume was observed by Kabluchko, Wigman and the author \cite{KSW20} in the study of the nodal volume of non-Gaussian band-limited functions, that is linear combinations of Laplace eigenfunctions. The results in \cite{KSW20} are based on a careful analysis of the spectral projector operator for $\Delta$  and micro-local analysis tools such as Sogge's bound \cite{Sogg88} on the $L^p$-norms of Laplace eigenfunctions. 

On the other hand, the proof of Theorem \ref{thm: main} uses only\textquotedblleft elementary\textquotedblright tools coming from the theory of orthogonal polynomials,  as in \cite{Sbook}, and the  probability language of random functions, as in \cite{BI}. In fact, the exposition in this note is fully self contained a part from a result from \cite{NS09}, which we do not include for the sake of brevity of the article, and the well-known Faber-Krahn inequality (Lemma \ref{lem: Faber-Krahn} below). 

\subsection{Discussion and Related results}
Since the pioneering work of Nazarov and Sodin \cite{NS}, the Nazarov-Sodin constant has been object of much attention. In \cite{NS09}, Nazarov and Sodin extended their work to the number of nodal domains of any ergodic Gaussian stationary field on any (compact and sufficiently smooth) manifold and in \cite{NS20}  they investigated the variance of $\mathcal{N}(f_n)$ in the Gaussian setting. Beliaev, McAuley and Murihead \cite{BMM20,BMM22,BMM22.1} studied , for certain Gaussian random fields $F$,  the number of connected components for the excursion sets $\{F\geq \ell\}$ and level sets $\{F=\ell\}$ for $\ell\neq 0$ and found estimates for the variance and a Central Limit Theorem. As far as nodal domains of random functions are concerned, Theorem \ref{thm: main} seems to be the first result addressing the universality of the nodal domains counting function.  

\vspace{2mm}

The precise value on the Nazarov-Sodin constant is currently unknown. Estimates based on percolation \cite{BS2002,BS07} suggest that 
$$\frac{c_{NS}}{4\pi}= \frac{3\sqrt{3}-5}{\pi}\approx 0.0624...,$$
however this seems slightly inconsistent with numerical simulations \cite{BK13}. Pleijel's nodal domains Theorem \cite{P56}, see also \cite{B15}, asserts that 
$$\frac{c_{NS}}{4\pi}\leq \left(\frac{2}{j_0}\right)^2\approx 0.691,$$
where $j_0$ is the smallest zero of the $0$-th Bessel function. Using some curvature bounds \cite{B02} it is possible to show the sharper bound 
$$\frac{c_{NS}}{4\pi} \leq \frac{1}{\sqrt{2\pi}}\approx 0.0225.$$
As far as lower bounds are concern, the only result we are aware of is by Ingremeau and Rivera \cite{IR19},
$$\frac{c_{NS}}{4\pi} \geq 1.39 \times 10^{-4}.$$

\vspace{2mm}

Finally, we would like to discuss some deterministic lower bounds on the nodal domains counting function. Results in this direction seem few and rare. Ghosh, Reznikov and Sarnak \cite{GRS13,GRS17} gave a (non-trivial) lower bound for the
number of nodal domains for Maass forms on a compact part of the modular surface.   Jung and Zelditch \cite{JZ16} gave a lower bound for a large class of negatively curved surfaces. 

We conclude by discussing the flat two dimensional torus $\mathbb{T}^2=\mathbb{R}^2/ \mathbb{Z}^2$. On $\mathbb{T}^2$, by Fourier expansions, every Laplace eigenfunction with eigenvalue $4\pi^2n^2$ can be expressed as a linear combination of exponentials $\{\exp(2\pi i \xi\cdot x)\}_{|\xi|^2=n}$, where $\xi \in \mathbb{Z}^2$. In particular, it is possible to form linear combinations of eigenfunctions, with the same eigenvalue, as in \eqref{function}. Bourgain \cite{BU}, and the subsequently Buckley and Wigman \cite{BW16}, showed that, for \textquotedblleft most\textquotedblright \textit{deterministic} Laplace eigenfunctions
$$\mathcal{N}(f_n)= c_{NS}n^2(1+_{n\rightarrow \infty}(1)).$$
In particular, Bourgain's result \cite{BU} holds for linear combination of eigenfunctions, as in \eqref{function}, with $Y_k= \exp(2\pi i\xi \cdot)$ and $\pm 1$ coefficients. In other words, on  $\mathbb{T}^2$ when $a_k=\pm 1$,  Theorem \ref{thm: main} holds \textit{deterministically}. 
\subsection{Notation}
\label{sec: notation}
To simplify the exposition we adopt the following standard notation: we write $A\lesssim B$ and $A\gtrsim B$ or $A=O(B)$ to designate the existence of an absolute constant $C>0$ such that $A\leq C B$ and $A\geq C B$. If the said constant $C>0$ depends on some parameter, $\beta$ (say), we write $A\lesssim_{\beta}B$ ecc,  if no parameter is specified in the notation, then the constant is absolute. The letters $C,c$ will be used to designate positive constants which may change from line to line. Finally, we write $o_{\beta\rightarrow \infty}(1)$ for any function that tends to zero as $\beta\rightarrow \infty$.
\section{Tools}
\subsection{Harmonic-analysis tools}
A very useful tool in the study of the nodal set of Laplace eigenfunctions on surfaces is the well-known Faber-Krahn inequality \cite[Chapter 4]{Cbook}, which we state here in a convenient form for our purpose:
\begin{lem}[Faber-Krahn inequality]
	\label{lem: Faber-Krahn}
	Let $f_n$ be as in \eqref{function} and $\Gamma \subset M$ be a nodal domain of $f_n$ with inner radius $r>0$, that is the radius of the largest geodesic ball inscribed in $\Gamma$, then we have 
	$$ r\gtrsim n^{-1}.$$
\end{lem}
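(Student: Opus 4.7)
\emph{Proof proposal.}
Since $f_n$ has constant sign on $\Gamma$ and vanishes on $\partial\Gamma$, the restriction $f_n|_\Gamma$ is, up to sign, the first Dirichlet eigenfunction of $\Gamma$, so $\lambda_1(\Gamma) = n(n+1)$. The plan is to exhibit a geodesic ball inscribed in $\Gamma$ of radius at least $c/n$, for a small absolute constant $c > 0$, centered at an interior maximum of $|f_n|$ on $\Gamma$.

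Let $x_0 \in \Gamma$ be a point where $|f_n|$ attains its maximum $M > 0$ on $\Gamma$; since $x_0$ is interior, $\nabla f_n(x_0) = 0$. In geodesic normal coordinates at $x_0$, rescale by setting $F(y) := f_n(\exp_{x_0}(y/n))/M$ on a large Euclidean ball. A direct computation shows that $F$ satisfies an elliptic equation of the shape $\Delta F + F + O(1/n) = 0$ (the $O(1/n)$ term absorbing both the shift $n(n+1) \mapsto n^2$ and the curvature corrections), together with $F(0) = \pm 1$ and $\nabla F(0) = 0$. Interior elliptic regularity bounds $\|\mathrm{Hess}\, F\|_{L^\infty(B(0,1))}$ by an absolute constant times $\|F\|_{L^\infty(B(0,2))}$, so Taylor's formula at the critical point $y = 0$ yields
$$
|F(y) - F(0)| \leq C\, |y|^2\, \|F\|_{L^\infty(B(0,2))}.
$$
For $|y|$ smaller than a fixed $c$, $F$ does not change sign on $B(0,c)$, so pulling back via the exponential map gives $B(x_0, c/n) \subset \Gamma$, as desired.

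The main obstacle is controlling $\|F\|_{L^\infty(B(0,2))} = \|f_n\|_{L^\infty(B(x_0, 2/n))}/M$ by an absolute constant. A priori this local $L^\infty$-norm could be arbitrarily larger than the value $M$ at an interior maximum on $\Gamma$; indeed, the global $L^\infty$-norm of a degree-$n$ spherical harmonic can exceed its $L^2$-norm by a factor of order $\sqrt{n}$. I would handle this by iterating Harnack's inequality for the positive eigenfunction $f_n$ across a chain of concentric geodesic shells contained in $\Gamma$, which, together with the fact that $x_0$ is the maximum on $\Gamma$, keeps the local sup within a bounded multiple of $M$ and closes the argument.
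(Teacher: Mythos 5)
First, note that the paper itself does not prove Lemma~\ref{lem: Faber-Krahn}: it is quoted from the literature (the reference to \cite{Cbook}), and the introduction explicitly lists it as one of the two external inputs to an otherwise self-contained exposition. So your attempt has to be measured against the standard proofs of the two-dimensional inner-radius bound rather than against an argument in the text.

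Your scheme is sound up to its key step: choosing an interior maximum $x_0$ of $|f_n|$ on $\Gamma$, rescaling, and using $\nabla F(0)=0$ together with an interior Hessian bound to preserve the sign on a ball of fixed rescaled radius would indeed give $B(x_0,c/n)\subset\Gamma$ --- \emph{provided} $\|F\|_{L^\infty(B(0,2))}=\sup_{B(x_0,2/n)}|f_n|/M$ is bounded by an absolute constant. That bound is essentially the whole content of the lemma, and your proposed patch does not deliver it. Harnack's inequality requires a solution that is positive on a full ball, while $f_n$ is positive only on $\Gamma$; whether balls of radius comparable to $1/n$ around $x_0$ lie in $\Gamma$ is precisely what is to be proved, so a ``chain of concentric geodesic shells contained in $\Gamma$'' of the required size cannot be assumed to exist --- the argument is circular. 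Even granting such a chain, Harnack controls $f_n$ only where it is positive, whereas $\|F\|_{L^\infty(B(0,2))}$ involves the values of $f_n$ on neighbouring nodal domains inside $B(x_0,2/n)$, where $|f_n|$ may a priori exceed $M$ by a large factor: $x_0$ maximizes $|f_n|$ only over $\Gamma$, and the growth of an eigenfunction across one wavelength is governed by its local doubling index, which is not uniformly bounded in $x_0$. The known proofs (e.g.\ Mangoubi's two-dimensional inner-radius theorem) are built to avoid exactly this obstruction: they combine a Faber--Krahn/local-asymmetry statement (the set where $f_n<0$ occupies a definite fraction of every ball of radius $\gtrsim 1/n$ centred on the nodal set) with the fact that a planar domain whose complement is uniformly dense at scale $r$ has first Dirichlet eigenvalue $\gtrsim r^{-2}$, and never require pointwise growth control at the maximum. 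As written, your argument has a genuine gap at its decisive step.
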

We will also need the following consequence of  elliptic regularity \cite[page 336]{Ebook} for harmonic functions. Before state the result, we need to introduce some notation. Recall that $\mathcal{H}_{n}$ is the space of spherical harmonics of degree $n$ and let $B(x,r)$ be the spherical disk centered at $x\in \mathbb{S}^2$ of radius $r>0$. We will need the following result:

\begin{lem}
	\label{lem: bounds} 
	Let $f\in \mathcal{H}_{n}$, $R>0$ be some (large) parameter and $k=0,1,2$,  then 
$$||f||^2_{C^{k}(B(x,R/n))}\lesssim (nR)^{2k+2}\int_{B(x,10R/n)} |f(y)|^2dy,$$
where the constant implied in the $\lesssim$-notation does not depend on $n$, $R$ or $x$.
\end{lem}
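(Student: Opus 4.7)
The strategy is to rescale the problem to a bounded Euclidean ball on which $f$ satisfies a uniformly elliptic PDE of bounded wavelength, apply a textbook interior elliptic regularity estimate there, and then carefully translate the bound back to the sphere, keeping track of how derivatives and volume elements transform under the rescaling. Set $\rho := R/n$ and use normal coordinates $\phi_x(w) = \exp_x(w)$ to identify a neighbourhood of $x \in \mathbb{S}^2$ with a neighbourhood of $0 \in \mathbb{R}^2$. Define the rescaled function
$$F(z) := f\bigl(\phi_x(z/n)\bigr), \qquad z \in B_{\mathbb{R}^2}(0, 10R),$$
assuming $10R/n$ is smaller than the injectivity radius (otherwise the statement is either trivial or covered by a global estimate on $\mathbb{S}^2$). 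A chain-rule calculation using $\Delta_g f + n(n+1) f = 0$ shows that $F$ satisfies
$$a^{ij}_n(z)\, \partial_i \partial_j F + b^i_n(z)\, \partial_i F + \Bigl(1 + \tfrac{1}{n}\Bigr) F \;=\; 0,$$
where $a^{ij}_n, b^i_n$ are smooth, uniformly bounded (in $n$, on any fixed ball), and $a^{ij}_n \to \delta^{ij}$ as $n \to \infty$. In particular the operator is uniformly elliptic with constants independent of $n$.

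Next, I would invoke the interior $L^2$ elliptic regularity estimate (Evans, PDE, Ch.~6), which in 2D combined with Sobolev embedding $H^{k+2} \hookrightarrow C^k$ gives, on the unit ball, $\|F\|_{C^k(B(0,1/2))} \lesssim \|F\|_{L^2(B(0,1))}$. To promote this to a ball of radius $R$, I cover $B(0,R)$ by unit balls centred at points of $B(0, R)$ and estimate on each, or equivalently rescale with $G(w) := F(Rw)$ and track scaling factors: derivatives scale as $R^k$ and $L^2$-norm as $R$, which yields
$$\|F\|^2_{C^k(B(0,R))} \;\lesssim\; R^{2k+2}\, \|F\|^2_{L^2(B(0,10R))}.$$
The $R^{2k+2}$ factor is not tight (for $R \geq 1$ an $R$-independent bound holds by the covering argument) but it suffices for the lemma.

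Finally, I would undo the rescaling. Writing $z = n w$ and using that $\partial^\alpha_w F = n^{-|\alpha|} \partial^\alpha_y f$ evaluated at the corresponding point (so $\|f\|^2_{C^k(B(x,R/n))} \leq C\, n^{2k} \|F\|^2_{C^k(B(0,R))}$ up to bounded metric factors from normal coordinates), and that the area element transforms as $dz = n^2(1 + O(R^2/n^2))\, dy$ (so $\|F\|^2_{L^2(B(0,10R))} \leq C\, n^2 \int_{B(x,10R/n)} |f|^2\, dy$), we obtain
$$\|f\|^2_{C^k(B(x,R/n))} \;\lesssim\; n^{2k} \cdot R^{2k+2} \cdot n^2 \int_{B(x,10R/n)} |f|^2\, dy \;=\; (nR)^{2k+2} \int_{B(x,10R/n)} |f|^2\, dy,$$
as claimed. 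The main (and only) obstacle is bookkeeping: making sure the powers of $n$ coming from derivative and volume rescaling combine correctly with the $R$-dependent constant coming from Sobolev embedding on a large ball; conceptually the lemma is a direct application of standard interior regularity for a uniformly elliptic operator on $\mathbb{R}^2$.
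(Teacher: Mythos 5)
Your argument is correct, but it proves the lemma by a genuinely different mechanism than the paper. You rescale by $n$ in normal coordinates so that $F(z)=f(\exp_x(z/n))$ satisfies a uniformly elliptic Helmholtz-type equation with bounded coefficients, and then invoke interior $L^2$ elliptic regularity together with the Sobolev embedding $H^{k+2}(\mathbb{R}^2)\hookrightarrow C^k$ and a covering of $B(0,R)$ by unit balls; the scaling bookkeeping ($n^{2k}$ from the $k$ derivatives, $n^2$ from the area element, and an $R$-dependent constant dominated by $R^{2k+2}$) then assembles into the stated bound. The paper instead avoids the elliptic machinery entirely: it forms the harmonic lift $h(x,t)=\exp(-\sqrt{n(n+1)}\,t)f(x)$ on the cylinder $\mathbb{S}^2\times[-2,2]$, which is harmonic for the product Laplacian, applies the mean value property on balls of radius $n^{-1}$ in the three-dimensional product space together with Cauchy--Schwarz to get $|f(y)|^2\lesssim n^2\int_{B(y,n^{-1})}|f|^2$, handles derivatives by applying the mean value property and the divergence theorem to $D^\alpha h$ (yielding $|Dh(w)|\lesssim W^{-1}\sup_{\tilde B(w,2W)}|h|$), and finishes with the same covering argument over $B(x,R/n)$. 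Your route is shorter to state but imports the interior regularity estimates and Sobolev embedding as black boxes, and requires a (routine but real) verification that the rescaled coefficients $a^{ij}_n, b^i_n$ are uniformly controlled on $B(0,10R)$; the paper's route is more self-contained and elementary, in keeping with its stated aim, at the cost of the slightly less transparent harmonic-lift trick and an extra auxiliary dimension. Both yield constants independent of $n$, $R$ and $x$ for $k=0,1,2$, so there is no gap in your proposal.
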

For the sake of completeness, we will provide a proof of Lemma \ref{lem: bounds} in Appendix \ref{sec:proof of standard lemmas}. We will also need the following bound on the $L^4$-norm of spherical harmonics, see \cite[Theorem 2]{SW81}:
\begin{lem}
	\label{lem: bound on L^4-norm}
	Let $\{Y_k\}_{k=-n}^n$ be an orthonormal base for $\mathcal{H}_n$, then we have 
	$$\int_{\mathbb{S}^2} |Y_k|^4 d\rho(x)\lesssim  n^{2/3}\log n,$$
	where $d\rho(\cdot)$ is the uniform measure on $\mathbb{S}^2$.
\end{lem}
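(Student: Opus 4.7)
Since any unit vector of $\mathcal{H}_n$ can be extended to an $L^2$-orthonormal basis, the claim is equivalent to the uniform estimate $\int_{\mathbb{S}^2}|Y|^4\,d\rho \lesssim n^{2/3}\log n$ for every $Y \in \mathcal{H}_n$ with $\|Y\|_{L^2}=1$. My plan is to apply the $TT^*$ method to the inclusion $T\colon L^2(\mathbb{S}^2) \to L^4(\mathbb{S}^2)$ restricted to $\mathcal{H}_n$: using the standard identity $\|T\|^2 = \|TT^*\|_{L^{4/3}\to L^4}$ together with the fact that, by the addition formula for spherical harmonics, $TT^*$ is convolution with the reproducing kernel
$$ K_n(x,y) = (2n+1)\, P_n(\langle x,y\rangle), $$
the problem reduces to proving the off-diagonal convolution estimate
$$ \|K_n * f\|_{L^4} \lesssim n^{1/3}(\log n)^{1/2}\,\|f\|_{L^{4/3}}. $$

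The key analytic input is the classical Laplace--Szeg\H{o} pointwise bound on Legendre polynomials \cite{Sbook}, namely $|P_n(\cos\theta)| \lesssim \min\bigl\{1,(n\sin\theta)^{-1/2}\bigr\}$. I would decompose $K_n$ into $O(\log n)$ dyadic angular pieces $K_n^{(k)}$ supported on the shells $\{2^k/n \le \theta(x,y) \le 2^{k+1}/n\}$ for $k=0,1,\dots,\lfloor\log n\rfloor$, together with the two polar caps near $\theta=0$ and $\theta=\pi$. On the $k$-th shell the pointwise bound gives $|K_n^{(k)}| \lesssim n\cdot 2^{-k/2}$, and the supports have area $\sim 2^{2k}/n^2$. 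On each shell one runs the $TT^*$ argument scale-by-scale, exploiting the high-frequency oscillations of $P_n$ within the shell (either by a restriction-type estimate or a direct stationary-phase calculation) to improve on the naive Young/Schur bound by a factor $\sim n^{-1/6}$; summing the shell contributions then yields the desired $(\log n)^{1/2}$ factor.

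The principal obstacle is that the elementary pointwise bound $|Y(x)|^2 \le K_n(x,x) = 2n+1$ combined with $\|Y\|_{L^2}=1$ only yields $\int |Y|^4 \lesssim n$, and likewise Young's convolution inequality gives only $\|K_n *\|_{L^{4/3}\to L^4} \lesssim \|K_n\|_{L^2} \sim n^{1/2}$, which after the $TT^*$ step reproduces the trivial bound $\int |Y|^4 \lesssim n$. Going below this barrier genuinely requires extracting cancellations from the off-diagonal oscillations of $P_n$, which is the non-trivial content of the restriction-type estimate in \cite[Theorem~2]{SW81} and the most delicate step of the entire argument.
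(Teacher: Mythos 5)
There is a genuine gap: the entire analytic content of the lemma is deferred to the step where you claim to ``improve on the naive Young/Schur bound by a factor $\sim n^{-1/6}$'' by ``exploiting the high-frequency oscillations of $P_n$ \dots (either by a restriction-type estimate or a direct stationary-phase calculation).'' No mechanism for this gain is given. As you yourself compute, size bounds alone (the Laplace--Szeg\H{o} estimate, Young, Schur, and interpolation on the dyadic shells) cannot beat $\|K_n\ast\|_{L^{4/3}\to L^4}\lesssim n^{1/2}$, i.e.\ the trivial $\int|Y|^4\lesssim n$; so the $TT^*$ reduction plus dyadic decomposition is a reformulation of the problem, not a proof. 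To actually extract the gain one would need, e.g., an $L^2\to L^2$ bound for convolution with each shell piece $K_n^{(k)}$ that beats Schur, which amounts to genuine asymptotics/cancellation for Legendre functions --- this is Sogge-type territory and is precisely what the paper is trying to avoid. You also misattribute this to \cite[Theorem 2]{SW81}: that result (Stanton--Weinstein, which the paper follows) is \emph{not} a restriction-type estimate and uses no oscillation whatsoever.

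The paper's proof is entirely different and much more elementary. It takes a maximizer $u$ of $\|u\|_{L^4}^4$ over the unit sphere of $\mathcal{H}_n$; the Euler--Lagrange condition gives $\pi_n(u^3)=\|u\|_{L^4}^4\,u$, where $\pi_n$ is convolution with $\varphi_n=(2n+1)P_n(\langle\cdot,\cdot\rangle)$. H\"older then yields $\|u\|_{L^4}^4\|u\|_{L^\infty}\le\|\varphi_n\|_{L^4}\|u\|_{L^4}^3$, hence $\|u\|_{L^4}\|u\|_{L^\infty}\le\|\varphi_n\|_{L^4}$, which is combined with the trivial $\|u\|_{L^4}^4\le\|u\|_{L^\infty}^2\|u\|_{L^2}^2$ and a case split on whether $\|u\|_{L^\infty}\lessgtr n^{1/3}$. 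The only input is $\|\varphi_n\|_{L^4}\lesssim n^{1/2}(\log n)^{1/4}$, which follows from the same pointwise Legendre bound you quote --- no cancellation needed. (Indeed, the lossy exponent $2/3$ and the $\log n$ in the statement are artifacts of this elementary bootstrap; a successful oscillatory argument would give Sogge's sharper $n^{1/2}$ with no log, which is a sign that the intended route is not the one you propose.) As written, your proposal does not constitute a proof.
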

Although much sharper bounds are known for the $L^4$-norm of Laplace eigenfunction, see for example the fundamental work by Sogge \cite{Sogg88}, Lemma \ref{lem: bound on L^4-norm} will suffice for our purposes and its, rather elementary,  proof will be provided in Appendix \ref{sec:proof of standard lemmas}. 

\subsection{Orthogonal polynomials tools}

 We will also need the following two facts about the two points function of $f_n$ as in \eqref{function}. Again, we need to introduce some notation. Let $P_n(\cdot)$ be the classic Legendre polynomial
\begin{align}
	\label{def: Legendre poly}
	&P_n(x)=\frac{1}{2^n n!}\frac{d^n(x^2-1)^n}{dx^n}  &P_n(1)=1.
\end{align}
Moreover, let $J_0(\cdot)$ be the $0$-th Bessel function
\begin{align}
\label{def: zeroth-bessel function}
J_0(x)=\frac{1}{2\pi}\int_{\mathbb{S}^1} e(x\cdot y)dy,
\end{align}
where $\mathbb{S}^1\subset \mathbb{R}^2$ is the unit circle and $e(\cdot)= \exp(2\pi i \cdot)$. We then have the following, see for example \cite[Page 454]{specialfbook}:
\begin{lem}[Two point function]
	\label{lem: two point function}
	Let $\{Y_k\}_{k=-n}^{n}$ be an orthonormal base for $\mathcal{H}_n$ and $x,y\in \mathbb{S}^2$, then we have 
	$$\frac{1}{2n+1}\sum_{k=-n}^n Y_k(x)Y_k(y)= P_n(\cos \Theta(x,y)), $$
	where $\Theta(x,y)$ is the angle between $x,y$ and $P_n$ is as in \eqref{def: Legendre poly}. 
\end{lem}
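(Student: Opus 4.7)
The statement is the classical addition formula for spherical harmonics, so the plan is to prove it by following the standard two-invariance argument and pinning down the constant at the end.

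First, I would set $K_n(x,y) := \sum_{k=-n}^n Y_k(x)Y_k(y)$ and argue that $K_n$ is independent of the chosen orthonormal basis of $\mathcal{H}_n$. Indeed, any other orthonormal basis $\{Y_k'\}$ is obtained from $\{Y_k\}$ via an orthogonal transformation $O$, and writing $Y_k' = \sum_j O_{kj} Y_j$ one checks that $\sum_k Y_k'(x)Y_k'(y) = \sum_{j,\ell}\big(\sum_k O_{kj} O_{k\ell}\big) Y_j(x)Y_\ell(y) = \sum_j Y_j(x)Y_j(y)$, because $O^T O = I$. This is the standard observation that $K_n$ is (a multiple of) the Schwartz kernel of the $L^2$-orthogonal projector onto $\mathcal{H}_n$.

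Next I would use this basis-independence to show that $K_n$ is $SO(3)$-invariant. For any rotation $R \in SO(3)$, the family $\{Y_k \circ R\}_{k=-n}^n$ is again an orthonormal basis of $\mathcal{H}_n$, since $\Delta_{\mathbb{S}^2}$ commutes with $R$ and the uniform measure $d\rho$ is $SO(3)$-invariant. Consequently $K_n(Rx,Ry) = K_n(x,y)$. Since the action of $SO(3)$ on pairs $(x,y) \in \mathbb{S}^2\times \mathbb{S}^2$ has orbits exactly characterized by the angle $\Theta(x,y)$, there exists a function $\phi_n\colon[-1,1]\to \mathbb{R}$ such that $K_n(x,y) = \phi_n(\cos\Theta(x,y))$.

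The remaining point, which is the main content of the lemma, is to identify $\phi_n$ with the Legendre polynomial $P_n$. For this I would fix the north pole $e_3$ and view $x \mapsto K_n(x,e_3)$ as a spherical harmonic of degree $n$ depending only on $\cos\theta$, where $\theta$ is the colatitude. Expressing $\Delta_{\mathbb{S}^2}$ in spherical coordinates, a zonal function $u(\cos\theta)$ satisfies $\Delta_{\mathbb{S}^2} u + n(n+1)u = 0$ if and only if $u$ solves the Legendre ODE $((1-t^2)u')' + n(n+1)u = 0$ on $[-1,1]$. Requiring $u$ to be smooth at $t=\pm 1$ forces, up to a constant, $u=P_n$ (this is the standard characterization of $P_n$ as the unique polynomial solution normalized by $P_n(1)=1$). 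Hence $\phi_n(\cos\Theta) = c_n P_n(\cos\Theta)$ for some constant $c_n$.

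To pin down $c_n$, evaluate at $x=y$: $K_n(x,x) = \sum_k Y_k(x)^2$ is constant by $SO(3)$-invariance, and integrating against the uniform probability measure gives $K_n(x,x) = \int_{\mathbb{S}^2} K_n(z,z)\,d\rho(z) = \sum_k \|Y_k\|_{L^2(\rho)}^2 = 2n+1$. Taking $x=y$ in the formula $K_n = c_n P_n(\cos\Theta)$ and using $P_n(1)=1$ yields $c_n = 2n+1$, which after dividing by $2n+1$ gives the desired identity. The only genuinely non-formal step is the identification of smooth zonal spherical harmonics with $P_n$, for which I would cite the classical characterization of Legendre polynomials as the regular polynomial solutions of the Legendre equation.
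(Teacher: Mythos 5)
Your argument is correct and complete: basis-independence of the projection kernel $K_n(x,y)=\sum_k Y_k(x)Y_k(y)$, rotation invariance, reduction to a zonal eigenfunction solving the Legendre ODE with regularity at the poles forcing a multiple of $P_n$, and the trace computation $K_n(x,x)=\sum_k\|Y_k\|_{L^2(\rho)}^2=2n+1$ together with $P_n(1)=1$ to fix the constant (note your normalization matches the paper's, which takes $\rho$ to be the uniform \emph{probability} measure, consistent with $Y_0(\text{north pole})=\sqrt{2n+1}$ in Claim \ref{claim: not-base indepdent}). The paper does not prove this lemma at all --- it is quoted directly from the literature --- so there is no argument to compare against; what you have written is the standard textbook proof of the addition formula and would serve as a self-contained substitute for the citation.
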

The following is \cite[Theorem 8.21.6]{Sbook}:
\begin{lem}
	\label{lem: asymptotic at plank-scale}
Let $P_n$ be as in \eqref{def: Legendre poly} and $J_0$ be as in \eqref{def: zeroth-bessel function}. There exists some absolute constant $C>0$ such that
$$P_n(\cos(\theta))= \frac{\theta}{\sin(\theta)}J_0\left(\left(n+\tfrac{1}{2}\right)\theta\right) +F,$$
where
$$F\lesssim\begin{cases}
\theta^{1/2}n^{-3/2} & C/n\leq \theta \leq \pi/2 \\
\theta^2 &0 \leq \theta \leq C/n
\end{cases}. $$
\end{lem}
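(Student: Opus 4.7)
The plan is to derive this classical Hilb--type asymptotic via the Liouville--Steklov method: put both $P_n(\cos\theta)$ and $J_0((n+1/2)\theta)$ into a second-order ODE in normal form, and compare the two solutions via variation of parameters.

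Under the substitution $x=\cos\theta$, the Legendre equation $(1-x^2)P_n''-2xP_n'+n(n+1)P_n=0$ becomes $P_n''+\cot\theta\,P_n'+n(n+1)P_n=0$, and setting $u(\theta):=\sqrt{\sin\theta}\,P_n(\cos\theta)$ removes the first-order term to give
\[
u''(\theta)+\Bigl[\bigl(n+\tfrac{1}{2}\bigr)^2+\frac{1}{4\sin^2\theta}\Bigr]u(\theta)=0.
\]
A parallel computation starting from the Bessel equation $zJ_0''+J_0'+zJ_0=0$ shows that $v(\theta):=\sqrt{\theta}\,J_0\bigl((n+\tfrac12)\theta\bigr)$ satisfies
\[
v''(\theta)+\Bigl[\bigl(n+\tfrac{1}{2}\bigr)^2+\frac{1}{4\theta^2}\Bigr]v(\theta)=0,
\]
and both $u,v$ behave like $\sqrt\theta$ as $\theta\to 0^+$, reflecting the normalizations $P_n(1)=1=J_0(0)$.

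The two equations differ only in the centrifugal term: $\frac{1}{4\sin^2\theta}-\frac{1}{4\theta^2}$ is bounded on $(0,\pi/2]$ and vanishes like $\theta^2/12$ at the origin. Viewing $u-v$ as a solution of the Bessel-type equation with right-hand side $\bigl[\tfrac{1}{4\theta^2}-\tfrac{1}{4\sin^2\theta}\bigr]u$, I apply variation of parameters using the basis $\bigl(\sqrt\theta\,J_0(\nu\theta),\sqrt\theta\,Y_0(\nu\theta)\bigr)$ of the homogeneous equation, where $\nu=n+\tfrac12$. The standard estimates $|J_0(z)|,|Y_0(z)|\lesssim(1+z)^{-1/2}$, together with the logarithmic singularity of $Y_0$ near zero, then control the resulting integral. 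Dividing through by $\sqrt{\sin\theta}$ converts this into an estimate for $P_n(\cos\theta)-(\theta/\sin\theta)^{1/2}J_0(\nu\theta)$, and the further discrepancy between $(\theta/\sin\theta)^{1/2}$ and $\theta/\sin\theta$ is $O(\theta^2)$, which is absorbed into $F$.

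The main obstacle is that both centrifugal potentials blow up as $\theta\to 0$, so one cannot simply treat their difference as a small perturbation there. The saving feature is that the difference vanishes to second order at the origin, which is exactly what delivers the $\theta^2$ error when $\theta\lesssim 1/n$. In the complementary regime $\theta\gtrsim 1/n$, the integrand inherits the oscillation-driven decay $(n\theta)^{-1/2}$ from $J_0$, and combining this with the $O(1)$ size of the potential difference and a single power of $\theta$ from the measure yields the advertised error $\theta^{1/2}n^{-3/2}$.
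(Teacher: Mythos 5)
The paper offers no proof of this lemma: it quotes it verbatim from Szeg\H{o} \cite[Theorem 8.21.6]{Sbook}, and your Liouville--Steklov comparison of the normal forms $u=\sqrt{\sin\theta}\,P_n(\cos\theta)$ and $v=\sqrt{\theta}\,J_0\bigl((n+\tfrac12)\theta\bigr)$ is exactly the classical route behind that citation, so the skeleton is the right one. Three points in the execution are wrong or missing. First, $\frac{1}{4\sin^2\theta}-\frac{1}{4\theta^2}$ does \emph{not} vanish at the origin: it equals $\frac14\cdot\frac{\theta^2-\sin^2\theta}{\theta^2\sin^2\theta}\to\frac{1}{12}$. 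Boundedness is all you actually need, but then to recover the clean $O(\theta^2)$ error for $\theta\le C/n$ you must estimate the Green's kernel $\sqrt{\theta t}\,\bigl[J_0(\nu\theta)Y_0(\nu t)-Y_0(\nu\theta)J_0(\nu t)\bigr]$ as a whole, where the logarithms of $Y_0$ cancel to leave $O\bigl(\sqrt{\theta t}\,(1+|\log(t/\theta)|)\bigr)$; bounding the two variation-of-parameters terms separately, as your sketch suggests, leaves a spurious factor $\log\frac{1}{n\theta}$. You also need to rule out a homogeneous contribution $Av_J+Bv_Y$ by matching asymptotics as $\theta\to0^+$: both $u$ and $v$ begin as $\sqrt\theta$ with the same coefficient and no $\sqrt\theta\log\theta$ term, which forces $B=0$ and then $A=0$. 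Second, in the regime $\theta\ge C/n$ the integrand contains $u(t)$, not $J_0(\nu t)$, so the decay $(nt)^{-1/2}$ cannot simply be ``inherited from $J_0$'' --- that is circular, since $u\approx v$ is what you are trying to prove. You need the a priori bound $|u(t)|\lesssim n^{-1/2}$, i.e.\ Bernstein's inequality $|P_n(\cos t)|\le\bigl(\tfrac{2}{\pi n\sin t}\bigr)^{1/2}$, or a Gronwall-type bootstrap on the integral equation itself; with only $|P_n|\le1$ the same computation yields the weaker error $\theta n^{-1}$, which misses the target by a factor $(n\theta)^{1/2}$.

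Third, your final absorption step fails in the range $C/n\le\theta\le\pi/2$: the discrepancy $\bigl[\tfrac{\theta}{\sin\theta}-(\tfrac{\theta}{\sin\theta})^{1/2}\bigr]J_0(\nu\theta)$ is of size $\theta^2(n\theta)^{-1/2}=\theta^{3/2}n^{-1/2}$, which exceeds $\theta^{1/2}n^{-3/2}$ by a factor $n\theta$. What your method (and Szeg\H{o}) actually proves is the statement with prefactor $(\theta/\sin\theta)^{1/2}$; the missing exponent in the lemma as printed is immaterial for the paper, which only applies the lemma at $\theta=O(R/n)$, but it cannot be repaired by the $O(\theta^2)$ absorption you propose.
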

\subsection{Probabilistic tools}
We will need a multi-dimensional version of Lindeberg-CLT, see for example \cite[Proposition 6.2]{FK20}: 
\begin{lem}[CLT]
	\label{lem: multidimension clt}
	Let $d>0$ be a positive integer and let $\{V_{n,k}\}_{n,k}$ be a triangular array whose $n$-th row consists of independent $\mathbb{R}^d$-valued random vectors with mean zero. That is, for any fixed $n,k$, $V_{n,k}= (V^{i}_{n,k})_{i=1}^d$ is a $d$-dimension vector. Moreover, we normalize the $V^i_{n,k}$ by writing  $(s^i_{n})^2= \sum_{k} \mathbb{E}[(V^i_{n,k})^2]$ and  $\tilde{V}^i_{n,k}= (s^i_{n})^{-1}V^i_{n,k}$. Assume the following two conditions:
	\begin{enumerate}
		\item Let  $(\Sigma_n)_{ij}= \mathbb{E}[\tilde{V}_{n,i}\tilde{V}_{n,j}]$, be the covariance matrix of the $n$-th row of $\{\tilde{V}_{n,k}\}_{n,k}$, then 
		$$\lim\limits_{n\rightarrow \infty} \Sigma_n= \Sigma_0,$$
		for some positive define $d\times d$-positive matrix.
		\item We have
		$$\max_{i=1,...,d} \frac{1}{(s^i_n)^2}\sum_{k} \mathbb{E}\left[(V^i_{n,k})^2\mathds{1}_{V^i_{n,k}>\varepsilon s^i_n}\right] \rightarrow 0, \hspace{5mm} n\rightarrow \infty,$$
		for any positive $\varepsilon>0$, where $\mathds{1}$ is the indicator function. 
	\end{enumerate}
Then, we have 

$$\sum_k\tilde{V}_{n,k} \overset{d}{\longrightarrow } N(0,\Sigma_0) \hspace{10mm} n\rightarrow \infty,$$
where the convergence is in distribution and the rate of convergence depends on the rates of convergence in (1) and (2) only.  
\end{lem}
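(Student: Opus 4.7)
The lemma is the multivariate version of the classical Lindeberg--Feller central limit theorem, so the natural strategy is to reduce to the one-dimensional case via the Cram\'er--Wold device. It suffices to show that, for every fixed $\theta = (\theta_1,\ldots,\theta_d) \in \mathbb{R}^d$, the projected sum $\theta \cdot \sum_k \tilde{V}_{n,k}$ converges in distribution to $N(0, \theta^T \Sigma_0 \theta)$.

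For fixed $\theta$, set $W_{n,k} = \theta \cdot \tilde{V}_{n,k}$. These form a one-dimensional triangular array whose $n$-th row consists of independent, mean zero random variables with total variance $\sum_k \mathbb{E}[W_{n,k}^2] = \theta^T \Sigma_n \theta$, which tends to $\theta^T \Sigma_0 \theta$ by hypothesis (1); this limit is strictly positive whenever $\theta \neq 0$ by positive definiteness of $\Sigma_0$. Applying the classical one-dimensional Lindeberg--Feller theorem then reduces the problem to verifying the scalar Lindeberg condition for $\{W_{n,k}\}_k$.

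To verify it, fix $\delta > 0$ and set $\varepsilon = \delta/(d \max_j |\theta_j|)$. The inclusion $\{|W_{n,k}| > \delta\} \subset \bigcup_i \{|\tilde{V}^i_{n,k}| > \varepsilon\}$ together with the crude bound $W_{n,k}^2 \leq d \sum_i \theta_i^2 (\tilde{V}^i_{n,k})^2$ and a union bound yields
\begin{equation*}
\sum_k \mathbb{E}\!\left[W_{n,k}^2 \mathds{1}_{|W_{n,k}|>\delta}\right] \;\leq\; d \sum_{i=1}^d \theta_i^2 \sum_k \mathbb{E}\!\left[(\tilde{V}^i_{n,k})^2 \mathds{1}_{|\tilde{V}^i_{n,k}|>\varepsilon}\right],
\end{equation*}
and the right-hand side tends to zero by hypothesis (2). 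Cram\'er--Wold then delivers the claimed multidimensional convergence, and the rate is clearly controlled by the rates in (1) and (2).

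The only non-routine point is the translation of the coordinate-wise Lindeberg condition (2) into the Lindeberg condition for arbitrary linear combinations, which as above is pure arithmetic; there is no real obstacle. An alternative self-contained route would be to run Lindeberg's swapping argument directly in $\mathbb{R}^d$ against $C_b^3$ test functions, replacing each $\tilde{V}_{n,k}$ by a Gaussian with matching covariance and using a third-order Taylor expansion together with (2) to control the remainder; this avoids the Cram\'er--Wold detour at the cost of a longer computation.
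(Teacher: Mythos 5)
The paper does not actually prove this lemma: it is imported verbatim from \cite[Proposition 6.2]{FK20}, so there is no internal proof to compare against. Your Cram\'er--Wold reduction to the scalar Lindeberg--Feller theorem is the standard route and is correct in outline, but two points need attention. First, the displayed inequality does not follow from the union bound alone: expanding $W_{n,k}^2\mathds{1}_{|W_{n,k}|>\delta}$ over the union $\bigcup_j\{|\tilde V^j_{n,k}|>\varepsilon\}$ produces cross terms $\mathbb{E}\bigl[(\tilde V^i_{n,k})^2\mathds{1}_{|\tilde V^j_{n,k}|>\varepsilon}\bigr]$ with $j\neq i$, which hypothesis (2) does not control directly. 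The standard fix is to note that on $\{|\tilde V^i_{n,k}|\leq\varepsilon\}\cap\{|\tilde V^j_{n,k}|>\varepsilon\}$ one has $(\tilde V^i_{n,k})^2\leq\varepsilon^2\leq(\tilde V^j_{n,k})^2$, so each cross term is dominated by a sum of diagonal terms; alternatively, bound $\mathbb{E}[(\tilde V^i_{n,k})^2\mathds{1}_{|W_{n,k}|>\delta}]\leq\mathbb{E}[(\tilde V^i_{n,k})^2\mathds{1}_{|\tilde V^i_{n,k}|>\varepsilon'}]+(\varepsilon')^2\mathbb{P}(|W_{n,k}|>\delta)$ for an auxiliary $\varepsilon'$ and control $\sum_k\mathbb{P}(|W_{n,k}|>\delta)$ by Chebyshev. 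This is routine arithmetic, not a real gap. Second, and more substantively, the final clause of the lemma --- that the rate of convergence is governed only by the rates in (1) and (2) --- is not decorative: it is what makes the convergence uniform over $x\in\mathbb{S}^2\setminus\mathcal{B}$ in the proof of Lemma \ref{lem: reduction}, so that the inner integral over $\Omega$ can be integrated against $d\rho(x)$. Cram\'er--Wold, as you invoke it, is purely qualitative, and ``the rate is clearly controlled'' does not follow from the soft statement of the one-dimensional theorem. The Lindeberg swapping argument against $C^3_b$ test functions, which you relegate to an alternative aside, is precisely the argument that yields the quantitative statement; it should be promoted to the main proof if the rate clause is to be justified.
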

We will also need the Continuous Mapping Theorem in the following form, see for example \cite[Theorem 2.1.]{BI}:

\begin{lem}[Continuous mapping]
	\label{lem: continous mapping}
	Let $P_n$, $P$ be probability measures on  a space $(S,\mathcal{S})$, where $\mathcal{S}$ is the Borel $\sigma$-field. Suppose that 
	$$P_n\overset{d}{\longrightarrow} P \hspace{5mm} n\rightarrow \infty,$$
	where the convergence is in distribution, or, in other words, with respect to the weak$^{\star}$ topology. Moreover, let $h:S\rightarrow S$ be a map and let $D_h$ be the set of discontinuity points of $h$. If $P(D_h)=0$, then 
	$$	P_n h^{-1}\overset{d}{\longrightarrow} P h^{-1} \hspace{5mm} n\rightarrow \infty.$$
\end{lem}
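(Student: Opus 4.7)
The plan is to invoke the Portmanteau theorem, which characterises weak convergence $P_n\to P$ by the equivalent condition that $\limsup_n P_n(F)\le P(F)$ for every closed $F\subset S$. Rather than trying to impose global continuity on $h$, I would exploit the hypothesis $P(D_h)=0$ via a single topological inclusion for preimages of closed sets.

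First, I would fix an arbitrary closed set $F\subset S$ and analyse the preimage $h^{-1}(F)$. The key observation is the set-theoretic inclusion
$$\overline{h^{-1}(F)}\subset h^{-1}(F)\cup D_h.$$
Indeed, if $x$ lies in the closure and $h$ is continuous at $x$, then any sequence $x_k\in h^{-1}(F)$ with $x_k\to x$ satisfies $h(x_k)\to h(x)$, and since $F$ is closed this forces $h(x)\in F$, so $x\in h^{-1}(F)$.

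Second, I would chain the Portmanteau upper bound applied to the closed set $\overline{h^{-1}(F)}$ with the above inclusion and the hypothesis $P(D_h)=0$:
\begin{align*}
\limsup_n (P_n h^{-1})(F) &\le \limsup_n P_n\bigl(\overline{h^{-1}(F)}\bigr)\\
&\le P\bigl(\overline{h^{-1}(F)}\bigr)\\
&\le P(h^{-1}(F))+P(D_h)\\
&= (Ph^{-1})(F).
\end{align*}
Since $F$ is an arbitrary closed set, a second application of Portmanteau (in the opposite direction) yields $P_n h^{-1}\to Ph^{-1}$ in distribution, concluding the argument.

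The delicate point I expect to be the main obstacle is measurability: $h$ is not assumed Borel, so $h^{-1}(F)$ need not be Borel a priori, and the quantities $P_n(h^{-1}(F))$ above require a meaning. The standard resolution is that $D_h$ is always Borel, and $h$ is continuous, hence Borel measurable, on its complement; thus $h^{-1}(F)\cap D_h^c$ is Borel, and $h^{-1}(F)$ differs from a Borel set only by a subset of the $P$-null set $D_h$. This discrepancy becomes harmless once one interprets $P_n h^{-1}$ and $Ph^{-1}$ via the completions of $P_n$ and $P$ (or equivalently works with outer measures throughout), which is the formalism adopted in \cite{BI}.
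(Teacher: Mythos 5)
Your proof is correct: it is precisely the standard Portmanteau argument via the inclusion $\overline{h^{-1}(F)}\subset h^{-1}(F)\cup D_h$, which is the proof given in Billingsley's book, the source the paper cites for this lemma (the paper itself supplies no proof). Your closing remark on measurability is a sensible patch, since the paper's statement omits the usual hypothesis that $h$ be Borel measurable.
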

Finally, we need the following standard fact about uniform convergence, see \cite[Theorem 3.5]{BI13}
\begin{lem}
	\label{lem: uniform integrability}
Let $X_n$ be a sequence of random variables such that 
	$$X_n\overset{d}{\longrightarrow} X,$$
	where the convergence is in distribution, for some random variable $X$. Suppose that there exists some constants $C,\alpha>0$, independent of $n$ such that $\mathbb{E}[X_n^{1+\alpha}]\leq C$, then 
	$$\mathbb{E}[X_n]\rightarrow \mathbb{E}[X].$$
\end{lem}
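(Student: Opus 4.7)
The plan is to deduce the convergence of expectations from the classical combination of convergence in distribution with uniform integrability, the latter being furnished by the $(1+\alpha)$-th moment bound via Markov's inequality. This is the textbook route to such a statement, and I do not anticipate a genuine obstacle; the only mild subtlety is handling the discontinuity of the indicator truncation at its edges, which will be circumvented by choosing the truncation level at continuity points of the limit law.

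First I would establish uniform integrability of $\{X_n\}$. By Markov's inequality applied to $|X_n|^{1+\alpha}$,
$$\mathbb{E}[|X_n|\mathbf{1}_{|X_n|>M}]\leq M^{-\alpha}\mathbb{E}[|X_n|^{1+\alpha}]\leq CM^{-\alpha},$$
which tends to $0$ as $M\to\infty$ uniformly in $n$. Moreover, Fatou's lemma (applied after passing to an almost-sure-convergent subsequence via Skorokhod's representation, or directly, since $|\cdot|^{1+\alpha}$ is continuous and nonnegative) gives $\mathbb{E}[|X|^{1+\alpha}]\leq C$, so that $\mathbb{E}[|X|\mathbf{1}_{|X|>M}]$ satisfies the same $O(M^{-\alpha})$ bound.

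Next I would truncate. For a continuity point $M$ of the distribution of $|X|$ (such $M$ form a co-countable dense set), the bounded function $\varphi_M(x):=x\mathbf{1}_{|x|\leq M}$ is continuous outside a $\mathrm{Law}(X)$-null set; Lemma \ref{lem: continous mapping} then yields $\varphi_M(X_n)\overset{d}{\longrightarrow}\varphi_M(X)$, and since these variables are bounded in absolute value by $M$, their expectations converge as $n\to\infty$. Writing
$$\mathbb{E}[X_n]=\mathbb{E}[\varphi_M(X_n)]+\mathbb{E}[X_n\mathbf{1}_{|X_n|>M}],$$
together with the analogous decomposition for $\mathbb{E}[X]$, and sending first $n\to\infty$ and then $M\to\infty$ through continuity points, the two tail contributions are uniformly $O(M^{-\alpha})$ by the previous step, and the bulk terms agree in the limit, yielding $\mathbb{E}[X_n]\to\mathbb{E}[X]$.
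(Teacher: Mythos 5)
Your argument is correct: the Markov-type bound $\mathbb{E}[|X_n|\mathbf{1}_{|X_n|>M}]\leq M^{-\alpha}\mathbb{E}[|X_n|^{1+\alpha}]$ gives uniform integrability, and the truncation at continuity points of the law of $|X|$ combined with the continuous mapping theorem and the uniform tail bound yields $\mathbb{E}[X_n]\to\mathbb{E}[X]$ (the only implicit reading is that the hypothesis $\mathbb{E}[X_n^{1+\alpha}]\leq C$ is meant as a bound on $\mathbb{E}[|X_n|^{1+\alpha}]$, which is harmless here since the lemma is applied to nonnegative counts). The paper offers no proof of this lemma, citing it as Theorem 3.5 of Billingsley, and your argument is precisely the standard proof of that cited result.
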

\subsection{Gaussian fields tools}
\label{sec: gaussian fields tools}
We will need the main result from \cite{NS}. Before stating it, we introduce some definitions which will be useful throughout the script. Let  $\Omega$ be an abstract probability space, with probability measure $\mathbb{P}(\cdot)$ and expectation $\mathbb{E}[\cdot]$. A (real-valued) Gaussian field $F$ is a map $F: \mathbb{R}^2 \times \Omega\rightarrow \mathbb{R}$ such that all  finite dimensional distributions $(F(x_1, \cdot),...F(x_n,\cdot))$ are multivariate Gaussian vectors and $F(x,\cdot)$ is continuous in $x$. We say that $F$ is \textit{centered} if $\mathbb{E}[F]\equiv0$ and \textit{stationary} if its law is invariant under translations $x\rightarrow x+\tau$ for $\tau \in \mathbb{R}^2$. The \textit{covariance} function of $F$ is 
\begin{align}
	\mathbb{E}[F(x)\cdot F(y)]= \mathbb{E}[F(x-y)\cdot F(0)]. \nonumber
\end{align}
Since the covariance is positive definite, by Bochner's theorem, it is the Fourier transform of some measure $\mu$ on $\mathbb{R}^2$. So we have 
\begin{align}
	\mathbb{E}[F(x)F(y)]= \int_{\mathbb{R}^2} e\left(\langle x-y, s \rangle\right)d\mu(s). \nonumber
\end{align}
The measure $\mu$ is called the \textit{spectral measure} of $F$. Since $F$ is real-valued, $\mu$ is symmetric, that is  $\mu(-A)=\mu(A)$ for any (measurable) subset $A\subset \mathbb{R}^2$. By Kolmogorov's theorem, $\mu$ fully determines $F$. Thus, from now on, we will simply write $F=F_{\mu}$ for the centered, stationary Gaussian field with spectral measure $\mu$. For the rest of the script $\mu$ will \textit{always} denote the uniform measure on $\mathbb{S}^1$ and also let $\mathcal{N}(F_{\mu},R)$ be the number of nodal domain of $F_{\mu}$ fully contained, that is not intersecting the boundary, in $B(R)$, the ball centered at zero of radius $R>0$.  We are finally ready to state the main result from \cite{NS} which we will need:
\begin{thm}[Nazarov-Sodin]
	\label{Thm: NS}
	Let $\mu$, $F_{\mu}$ and $\mathcal{N}(F_{\mu},R)$ be as above. Then, there exists some constant $c_{NS}>0$ such that 
	$$\mathbb{E}[\mathcal{N}(F_{\mu},R)]=c_{NS}\pi R^2 +o(R^2).$$
\end{thm}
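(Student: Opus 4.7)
The plan is to adapt the classical Nazarov--Sodin strategy: combine an integral-geometric sandwich with the mean ergodic theorem, exploiting the stationarity and ergodicity of $F_\mu$. Since the spectral measure $\mu$ (uniform on $\mathbb{S}^1$) has no atoms, $F_\mu$ is stationary and translations act ergodically on its distribution. Moreover, differentiating under the integral in the covariance representation shows that $F_\mu$ is almost surely a Helmholtz eigenfunction, $\Delta F_\mu + 4\pi^2 F_\mu = 0$, so Faber--Krahn (Lemma \ref{lem: Faber-Krahn} in the planar setting) yields that every nodal domain has inner radius $\gtrsim 1$, whence $\mathcal{N}(F_\mu, R) \lesssim R^2$ deterministically.

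First I would set up the sandwich. Fix scales $1 \ll r \ll R$. A nodal component of $F_\mu$ fully contained in $B(R)$ either fits inside some small disk $B(x,r) \subset B(R)$, in which case it contributes to the local count $\mathcal{N}(F_\mu\!\mid_{B(x,r)}, r)$ for $x$ in a set of measure $\pi r^2 + O(r \cdot \mathrm{perimeter})$, or it has diameter $\geq r$, in which case it contributes nothing locally and the number of such large components is controlled by the total nodal length divided by $r$. Integrating the local counts over $x \in B(R-r)$ and rearranging gives
\begin{equation*}
\left| \mathcal{N}(F_\mu, R) - \frac{1}{\pi r^2} \int_{B(R-r)} \mathcal{N}(F_\mu\!\mid_{B(x,r)}, r) \, dx \right| \lesssim \frac{\mathrm{length}(\{F_\mu = 0\} \cap B(R))}{r} + R.
\end{equation*}
A Kac--Rice computation (using the finite spectral support of $\mu$) bounds the expected nodal length by $O(R^2)$, so after dividing by $\pi R^2$ the expected error is $O(1/r)$.

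Next, for fixed $r$, the integrand $x \mapsto \mathcal{N}(F_\mu\!\mid_{B(x,r)}, r)$ is a stationary random field bounded by $O(r^2)$, so the mean ergodic theorem (applied to the ergodic translation action) gives
\begin{equation*}
\frac{1}{\pi R^2} \int_{B(R-r)} \mathcal{N}(F_\mu\!\mid_{B(x,r)}, r) \, dx \xrightarrow{L^1(\Omega)} \mathbb{E}[\mathcal{N}(F_\mu, r)] \quad \text{as } R \to \infty.
\end{equation*}
Combining with the sandwich, $\mathbb{E}[\mathcal{N}(F_\mu, R)]/(\pi R^2)$ is Cauchy in $R$, hence converges to some $c_{NS} \geq 0$. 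Positivity $c_{NS} > 0$ follows from a barrier construction: with positive probability $F_\mu$ has a strict positive bump in the unit disk dominating its boundary values, producing at least one fully contained nodal component; by stationarity this forces $\mathbb{E}[\mathcal{N}(F_\mu, R)] \gtrsim R^2$.

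The main obstacle is the quantitative control of boundary-straddling and long-diameter components in the sandwich. Bounding their contribution requires not just the expected nodal length (Kac--Rice) but also careful integral-geometric estimates showing that "topologically complicated" pieces of the nodal set are rare, and it is here that the bulk of the technical weight sits; the ergodic-theorem step and the barrier step are by comparison short once the sandwich is in place.
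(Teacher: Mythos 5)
This is one of the two results the paper deliberately does \emph{not} prove: Theorem~\ref{Thm: NS} is imported from Nazarov and Sodin, and the author states explicitly that its proof is omitted for brevity. So there is no internal argument to compare yours against, and the question is only whether your outline is a viable reconstruction of the original. It is: the integral-geometric sandwich, the use of stationarity, and the barrier argument for positivity are exactly the Nazarov--Sodin strategy, and your sandwich is the same semi-local device the paper itself deploys for $f_n$ in \eqref{eq: semi-locality}, driven by the same two inputs (a deterministic nodal length bound and Faber--Krahn, the latter applicable because $F_\mu$ is a.s.\ a Helmholtz eigenfunction, as you correctly note). Two remarks. First, for the statement as given --- which concerns only the expectation --- the ergodic theorem is unnecessary: stationarity and Fubini give $\mathbb{E}\bigl[\int_{B(R-r)}\mathcal{N}(F_\mu|_{B(x,r)},r)\,dx\bigr]=\pi(R-r)^2\,\mathbb{E}[\mathcal{N}(F_\mu,r)]$ directly, and the Cauchy property of $\mathbb{E}[\mathcal{N}(F_\mu,R)]/(\pi R^2)$ follows; ergodicity of the translation action (via the non-atomicity of $\mu$) is only needed for the almost-sure version of the theorem, which is not what is being asserted here. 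Second, the one genuine gap is the positivity $c_{NS}>0$: you name the barrier construction but do not indicate how to realize it, and this is the step that cannot be waved through. One must exhibit a function in the spectral synthesis of $\mu$ (equivalently, a superposition of plane waves of frequency $1$) possessing a compact nodal component, and then show that the Gaussian field lands in a $C^1$-neighborhood of it with positive probability; without this you only obtain $c_{NS}\ge 0$, which is strictly weaker than the theorem. As a blueprint the proposal is sound, but it is a blueprint of a long proof rather than a proof.
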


We will also need the following standard lemma, see for example \cite[Lemma 6]{NS}:
\begin{lem}[Bulinskaya's lemma]
	\label{lem: Bulinskaya's Lemma}
	Let $F=F_{\mu}$, with $\mu$ be the Lebesgue measure on the unit circle $\mathbb{S}^1$. Then $F\in C^1_*(W)$ almost surely, where is $C^1_*(W)$ as in Lemma \ref{lem: continuity lemma} (below). 
\end{lem}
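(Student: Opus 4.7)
The plan is to prove two regularity statements: first that $F$ is almost surely $C^1$ (in fact much smoother) on $W$, and second the core content of Bulinskaya's lemma, namely that almost surely there is no point $x \in W$ at which $F(x)$ and $\nabla F(x)$ vanish simultaneously. The first assertion is standard: since $\mu$ is compactly supported, the covariance $K(x,y)= \int_{\mathbb{S}^1} e(\langle x-y,s\rangle)\,d\mu(s)$ is real-analytic in $(x,y)$, so Kolmogorov's continuity theorem applied to the field together with its derivatives produces a modification lying in $C^k(W)$ for any $k$.

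For the non-degeneracy statement the key observation is that, at each fixed $x$, the Gaussian vector $(F(x), \partial_1 F(x), \partial_2 F(x))$ has a non-degenerate covariance matrix. Indeed, by stationarity and the symmetry $\mu(-A)=\mu(A)$ one has $\mathbb{E}[F\,\partial_j F]=0$ and $\mathbb{E}[\partial_i F\,\partial_j F] = 4\pi^2 \int_{\mathbb{S}^1} s_i s_j\, d\mu(s)$, which for $\mu$ uniform on $\mathbb{S}^1$ equals $2\pi^2 \delta_{ij}$. Hence the covariance is a positive multiple of the identity on $\mathbb{R}^3$ (after the obvious block decomposition), and in particular the joint density of $(F(x),\nabla F(x))$ is bounded on $\mathbb{R}^3$ by a constant independent of $x$.

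With these two ingredients I would run the classical covering argument. For $\varepsilon>0$ set
\[
A_\varepsilon = \{ \exists\, x\in W: |F(x)|<\varepsilon \text{ and } |\nabla F(x)|<\varepsilon\}.
\]
Cover $W$ by $N(\varepsilon)=O(\varepsilon^{-2})$ balls of radius $\varepsilon$ with centers $x_i$, and condition on the event $E_M = \{\|F\|_{C^2(W)}\leq M\}$. On $E_M$, a Taylor expansion around $x_i$ shows that if $A_\varepsilon$ is realised inside the $i$-th ball then $|F(x_i)|\lesssim (1+M)\varepsilon$ and $|\nabla F(x_i)|\lesssim (1+M)\varepsilon$; by the bounded-density estimate this event has probability at most $O((1+M)^3\varepsilon^3)$. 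Taking a union bound over the $O(\varepsilon^{-2})$ balls gives $\mathbb{P}(A_\varepsilon\cap E_M)\lesssim (1+M)^3\varepsilon$. Since $F\in C^2(W)$ a.s., $\mathbb{P}(E_M^c)\to 0$ as $M\to\infty$; sending first $\varepsilon\to 0$ and then $M\to\infty$ shows $\bigcap_\varepsilon A_\varepsilon$ is a null event, which is exactly the $C^1_*$ condition.

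The main (minor) obstacle is organizing the quantifiers in the covering step: the $C^2$ norm of $F$ is almost surely finite but not deterministically bounded, so the Taylor estimate must be handled on the event $E_M$ and removed only at the end, rather than being used as a pointwise bound. Every other ingredient, including the smoothness of $F$, the non-degeneracy of the covariance of $(F,\nabla F)$, and the union bound, is essentially mechanical once this point is set up correctly.
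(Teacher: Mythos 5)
The paper does not actually prove this lemma --- it is quoted directly from \cite[Lemma 6]{NS} --- so there is no internal proof to compare against; I will therefore assess your argument on its own. The interior part is the standard Bulinskaya covering argument and is correct as far as it goes: the covariance computation $\mathbb{E}[F\,\partial_j F]=0$, $\mathbb{E}[\partial_i F\,\partial_j F]=2\pi^2\delta_{ij}$ for the normalized uniform measure on $\mathbb{S}^1$ is right, the resulting uniform bound on the density of $(F(x),\nabla F(x))$ in $\mathbb{R}^3$ is legitimate, and the way you localize on $E_M=\{\|F\|_{C^2}\leq M\}$ before taking $\varepsilon\to 0$ and then $M\to\infty$ handles the quantifier issue correctly.

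There is, however, a genuine gap: you verify only the first of the two conditions defining $C^1_{\star}(W)$ in Lemma \ref{lem: continuity lemma}. The definition also requires that for every $x\in\partial B(W)$ one has $|g|+\bigl|\nabla g-\frac{x\cdot\nabla g}{|x|^2}x\bigr|>0$, i.e.\ that the restriction of $F$ to the circle $\partial B(W)$ has no zero at which its tangential derivative also vanishes. This is not implied by the interior statement: a nodal line can be tangent to $\partial B(W)$ at a point where $\nabla F\neq0$ (the gradient being purely radial there), and ruling this out is precisely what guarantees that nodal domains are either fully inside or genuinely crossing the boundary, hence that $\mathcal{N}(\cdot,W)$ is continuous at $F$. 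The repair is the one-dimensional analogue of your own argument: set $G(\theta)=F(W\cos\theta,W\sin\theta)$; the pair $(G(\theta),G'(\theta))$ is a non-degenerate two-dimensional Gaussian vector, since $\mathbb{E}[F\,\partial_\tau F]=0$ and $\mathbb{E}[(\partial_\tau F)^2]=2\pi^2W^2>0$ for the tangential derivative $\partial_\tau$. Covering the circle by $O(\varepsilon^{-1})$ arcs of length $\varepsilon$ and using the resulting $O((1+M)^2\varepsilon^2)$ per-arc bound on the event $E_M$ gives a total probability $O_M(\varepsilon)$, and one concludes exactly as in your interior step. Without this second half the lemma as stated is not established.
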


\subsection{Analysis tools}
We will need the following well-known fact about the nodal length, that is the volume of the zero set, of a function $f_n$ as in \eqref{function}.
\begin{lem}
\label{lem: nodal length}
Let $f_n$ be as in \eqref{function} and let $\mathcal{L}(f_n)=\vol(x\in \mathbb{S}^2: f(x)=0)$, then we have 
$$\mathcal{L}(f_n)\lesssim n.$$
\end{lem}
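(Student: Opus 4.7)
The plan is to combine the classical Cauchy--Crofton integral-geometric formula on $\mathbb{S}^2$ with the elementary fact that $f_n$ restricted to any great circle is a trigonometric polynomial of degree at most $n$. Parametrise the space of great circles by unit normals $v\in\mathbb{S}^2$ (identified with $-v$) and let $\gamma_v$ denote the great circle cut out by the plane $v^\perp$. The Cauchy--Crofton formula on the sphere states that for any rectifiable curve $\Gamma\subset\mathbb{S}^2$ one has $\mathcal{L}(\Gamma)\asymp\int_{\mathbb{S}^2}\#(\Gamma\cap\gamma_v)\,d\rho(v)$, where $d\rho$ is the uniform measure on $\mathbb{S}^2$. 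Applying this to $\Gamma=f_n^{-1}(0)$, which is smooth outside a finite set by \cite{C76}, reduces the problem to a pointwise bound on $\#(f_n^{-1}(0)\cap\gamma_v)$.

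Next I would bound this intersection count on a fixed great circle. After rotating coordinates we may assume $\gamma_v=\{(\cos\theta,\sin\theta,0):\theta\in[0,2\pi)\}$. Since $f_n$ is the restriction to $\mathbb{S}^2$ of a homogeneous harmonic polynomial $P$ of degree $n$ on $\mathbb{R}^3$, the function $\theta\mapsto P(\cos\theta,\sin\theta,0)$ expands as a homogeneous polynomial of degree $n$ in $(\cos\theta,\sin\theta)$, and hence is a trigonometric polynomial of degree at most $n$ in $\theta$. A non-zero trigonometric polynomial of degree $n$ has at most $2n$ real zeros on $[0,2\pi)$, so either $f_n$ vanishes identically on $\gamma_v$ or $\#(f_n^{-1}(0)\cap\gamma_v)\leq 2n$. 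Because $f_n^{-1}(0)$ is a one-dimensional subset of $\mathbb{S}^2$ (up to finitely many points), the set of normals $v$ for which $f_n|_{\gamma_v}\equiv 0$ has $\rho$-measure zero and therefore does not contribute to the Cauchy--Crofton integral.

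Combining the two ingredients gives $\mathcal{L}(f_n)\lesssim 2n\cdot\rho(\mathbb{S}^2)\lesssim n$, as claimed. The argument is entirely deterministic, and there is no serious obstacle: the only point requiring a moment of care is the negligibility of the exceptional set of great circles on which $f_n$ vanishes identically, which is handled above. A possible alternative, avoiding Cauchy--Crofton, is to slice $\mathbb{S}^2$ by a one-parameter family of parallel circles and integrate the bound on the number of zeros per slice, but the integral-geometric route is cleaner and the constants are irrelevant for the $\lesssim$ statement.
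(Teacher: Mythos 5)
Your argument is correct and is essentially the paper's own: both proofs are Crofton-type integral-geometric arguments that reduce the nodal length to counting zeros of a degree-$n$ polynomial along one-dimensional slices, where the count is at most $O(n)$. The only difference is cosmetic --- you invoke the classical spherical Cauchy--Crofton formula over great circles $\gamma_v$ (and your exceptional set is in fact finite, since $f_n|_{\gamma_v}\equiv 0$ forces the linear form $\langle v,\cdot\rangle$ to divide the underlying homogeneous harmonic polynomial, so there are at most $n$ such circles), whereas the paper passes to coordinates on a square and derives the needed Crofton-type inequality by hand from projections onto the axes and intersections with horizontal and vertical lines.
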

Although Lemma \ref{lem: nodal length} follows from the much more general result of Donnelly-Fefferman \cite{DF}, we will give an \textquotedblleft elementary\textquotedblright proof in Appendix \ref{sec:proof of standard lemmas}. We will also need the following consequence of Thom's isotopy Theorem, see for example \cite[Theorem 3.1]{EP13} and also \cite[Claim 4.3]{NS09}. We state it in a way convenient for our purposes and will provide a proof, for completeness, in Appendix \ref{sec:proof of standard lemmas}:  
\begin{lem}
	\label{lem: continuity lemma}
	Let $W>0$ be some parameter, $B(W)$ be the ball of radius $W$ centered at the origin and $\partial B(W)$ be its boundary. Let us write 
	
	\begin{align}
		\nonumber
		C^1_{\star}(W):= \{g\in C^1(B(2W))| |g(x)| +|\nabla g(x)|>0 \hspace{3mm} \text{for all} \hspace{3mm} x\in B(W)  \\
		 \text{and} \hspace{3mm} |g| + \left| \nabla g - \frac{x\cdot \nabla g}{|x|^2}x\right|>0 \hspace{3mm} \text{for all} \hspace{3mm} x\in \partial B(W) \}. \nonumber
	\end{align}
	and let $\mathcal{N}(g,W)$   be the number of nodal domain of $g$ fully contained in $B(W)$, that is $\Gamma \cap B(W)=\emptyset$ for all nodal domains of $g$. Then, $N(\cdot,W)$ is a continuous functional on $C^1_{\star}(W)$.
\end{lem}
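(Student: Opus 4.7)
The plan is to establish that $\mathcal{N}(\cdot,W)$ is locally constant on $C^1_{\star}(W)$. Fix $g\in C^1_{\star}(W)$. Since $\bar B(W)$ is compact, the first defining condition yields an $\varepsilon>0$ with $|g(x)|+|\nabla g(x)|\geq \varepsilon$ for every $x\in \bar B(W)$, and the second gives an analogous uniform lower bound on $\partial B(W)$ for $|g|$ plus the tangential part of $\nabla g$. For any $h$ with $\|h-g\|_{C^1(B(2W))}<\varepsilon/4$ both inequalities persist, so $h\in C^1_{\star}(W)$; in particular $C^1_{\star}(W)$ is open in $C^1(B(2W))$, so continuity of $\mathcal{N}(\cdot,W)$ reduces to showing $\mathcal{N}(h,W)=\mathcal{N}(g,W)$ for all such $h$.

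To compare the nodal domain counts, I would interpolate linearly via $g_t=(1-t)g+th$ for $t\in[0,1]$. The same uniform bound shows $g_t\in C^1_{\star}(W)$ for every $t\in[0,1]$, provided $\|h-g\|_{C^1}$ is chosen small enough. The first condition guarantees that $g_t$ has no critical zero in $\bar B(W)$, hence $\{g_t=0\}\cap B(W)$ is a smooth $1$-submanifold of $B(W)$; the second condition says that this submanifold meets $\partial B(W)$ transversally in $\partial B(W)$. Thus $\{g_t=0\}\cap \bar B(W)$ is a $C^1$ one-manifold with boundary on $\partial B(W)$, depending continuously on $t$.

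Thom's isotopy theorem (as invoked in \cite[Theorem 3.1]{EP13}) then produces an ambient isotopy $\Phi_t\colon \bar B(W)\to \bar B(W)$ with $\Phi_0=\mathrm{id}$, $\Phi_t(\partial B(W))=\partial B(W)$, and $\Phi_t(\{g_0=0\}\cap \bar B(W))=\{g_t=0\}\cap \bar B(W)$. The homeomorphism $\Phi_1$ induces a bijection between connected components of $B(W)\setminus \{g=0\}$ and those of $B(W)\setminus \{h=0\}$. Moreover, since $\Phi_t$ fixes $\partial B(W)$ setwise, a component whose closure meets $\partial B(W)$ is sent to a component with the same property, so $\Phi_1$ restricts to a bijection between nodal domains of $g$ fully contained in $B(W)$ and those of $h$ with the same property. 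Hence $\mathcal{N}(h,W)=\mathcal{N}(g,W)$.

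The subtle point I expect to be the main obstacle is the behaviour of the nodal set along $\partial B(W)$: a priori a small $C^1$ perturbation could push an arc of $\{g=0\}$ across $\partial B(W)$, thereby converting a domain that touches the boundary into one fully inside $B(W)$ (or vice versa) and changing the count. The transversality built into the second defining condition of $C^1_{\star}(W)$ is precisely what rules this out, by forcing $\{g_t=0\}$ to cross $\partial B(W)$ cleanly for every $t$; implementing this rigorously inside the Thom isotopy argument (working with manifolds with boundary rather than the closed version) is the delicate part.
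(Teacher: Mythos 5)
Your argument is correct in outline, but it follows a genuinely different route from the one the paper actually writes down. You reduce continuity to local constancy (via the compactness/openness observation, which is fine), interpolate linearly inside $C^1_{\star}(W)$, and then invoke Thom's isotopy theorem for manifolds with boundary to produce an ambient homeomorphism of $\bar{B}(W)$ preserving $\partial B(W)$ and matching the two nodal pictures; the count of components avoiding the boundary is then preserved for the reasons you give. The paper instead gives a self-contained, elementary argument in the style of \cite[Claim 4.2]{NS}: using the uniform lower bound $|g|+|\nabla g|>b$ and the separation of the finitely many nodal components from each other and from $\partial B(W)$, it shows that for $t$ small the connected components $\Gamma(t)$ of the sublevel set $\{|g|\le t\}$ are pairwise disjoint, stay away from $\partial B(W)$, and each traps exactly one nodal component of $g$; for $g_k$ close to $g$ in $C^1$ the zero set of $g_k$ is confined to $\bigcup\Gamma(t)$, which forces $\mathcal{N}(g_k,W)=\mathcal{N}(g,W)$. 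What your approach buys is a cleaner conceptual statement (an actual homeomorphism between the nodal sets, with the boundary transversality doing exactly the job you identify in your last paragraph); what the paper's approach buys is self-containedness and freedom from the one technical caveat in yours, namely that the standard construction of the ambient isotopy integrates a vector field that is only continuous when $g$ is merely $C^1$, so one must either quote a version of the isotopy theorem valid in this regularity (the paper's citation \cite[Theorem 3.1]{EP13} is of this type) or smooth the data first. If you make that citation precise, your proof stands.
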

We comment that the condition $\left| \nabla g - \frac{x\cdot \nabla g}{|x|^2}x\right|>0$ assures that the nodal set does not touch the boundary of $B(W)$ tangentially at one point. In particular, it implies that all the nodal domains that do not intersect $\partial B(W)$ are fully contained in $B(W)$ (and well separated from the boundaries).

\section{Convergence in distribution}
We are finally ready to begin the proof of Theorem \ref{thm: main}. In order to state the main result of this section, we need to introduce some notation. Let $\exp_x:T_x\mathbb{S}^2\cong\mathbb{R}^2 \rightarrow \mathbb{S}^2$ be the exponential map, let use define 
\begin{align}
\label{def: F_x}
	F_{x,R}(y)= F_x(y):= f(\exp_x(Ry/n)),
	\end{align}
where $R>0$ is some parameter and $y\in B(0,1)$, the ball centered at zero of radius $1$. In \eqref{def: F_x}, we tacitly assume that $n$ is much larger than $R$ so that the exponential map is a diffeomorphism. Now, we write $\mathcal{N}(F_x)$ for the number of nodal domains of $F_x$ fully contained in $B(0,1)$. Recalling that $\Omega$ is the abstract probability space where all the objects in the script are defined,  we can think of $\mathcal{N}(F_x)$ as a random variable $\Omega\times \mathbb{S}^2\rightarrow \mathbb{R}$, where $\Omega\times \mathbb{S}^2$ is equipped with the probability measure 
\begin{align}
\label{def: probability measure}
d\sigma = d\mathbb{P} \otimes d\rho,
\end{align}
where $d\rho$ is the uniform probability measure on the unit sphere.

\vspace{3mm}

 The main result of this section is the following:
\begin{prop}
	\label{prop: conv in distribution}
Let $\mu$, $F_{\mu}$, $N(F_{\mu},R)$ be as in section \ref{sec: gaussian fields tools} and $R>0$ some fixed parameter. Then, we have 
$$ \mathcal{N}(F_x) \overset{d}{\longrightarrow} \mathcal{N}(F_{\mu},R) \hspace{5mm} n\rightarrow \infty,$$
where the convergence happens in the cross space $\Omega \times \mathbb{S}^2$ equipped with the measure $d\sigma$ in \eqref{def: probability measure} and the speed of convergence depends on $R$.  
\end{prop}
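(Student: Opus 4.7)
I follow the three-step outline from the introduction. First, I carve out a subset $B_n \subset \mathbb{S}^2$ of vanishing measure outside of which every summand of $F_x$ is uniformly small on $B(0,1)$. Second, on the complement I apply a Lindeberg-type central limit theorem to obtain finite-dimensional convergence of $F_x$ to the Gaussian field $F_\mu(R\, \cdot)$. Third, I upgrade this to weak convergence in $C^1(B(0,1))$ via elliptic-regularity tightness and invoke the continuous mapping theorem to transfer the convergence to $\mathcal{N}(F_x)$.

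For the bad set, pick a slowly decaying $\beta_n = o(1)$, say $\beta_n = n^{-1/100}$, and let
$$B_n := \Bigl\{ x \in \mathbb{S}^2 : \max_{|k| \leq n} \sup_{z \in B(x, R/n)} c_n^2 |Y_k(z)|^2 > \beta_n^2 \Bigr\}.$$
Lemma \ref{lem: bounds} with derivative order zero bounds the inner supremum by $(nR)^2 \int_{B(x, 10R/n)} |Y_k|^2 d\rho$; squaring, applying Cauchy--Schwarz on $B(x, 10R/n)$, exchanging the order of integration via Fubini, and plugging in the sub-convex $L^4$ bound of Lemma \ref{lem: bound on L^4-norm} gives an $O_R(n^{-4/3} \log n)$ second-moment estimate per index $k$. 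Chebyshev's inequality and a union bound over $|k| \leq n$ then yield $\rho(B_n) = O_R(\beta_n^{-4} n^{-1/3} \log n) \to 0$. Crucially, for $x \notin B_n$ every single summand of $F_x$ is uniformly bounded on $B(0, 1)$ by $C_R\, \beta_n |a_k|$.

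For the finite-dimensional CLT, fix $y_1, \ldots, y_d \in B(0, 1)$. By Lemma \ref{lem: two point function} the covariance $\Cov(F_x(y_i), F_x(y_j)) = P_n(\cos \Theta_n)$ is \emph{independent} of $x$, where $\Theta_n = R|y_i - y_j|/n + O((R/n)^3)$ by basic exponential-map geometry. Lemma \ref{lem: asymptotic at plank-scale} then gives $P_n(\cos \Theta_n) \to J_0(R(y_i - y_j))$, which is exactly the covariance of $F_\mu$ at scale $R$. For $x \notin B_n$ the Lindeberg condition of Lemma \ref{lem: multidimension clt} is verified by combining the previous paragraph's uniform summand bound with $\mathbb{E}[|a_0|^2] = 1$; hence $(F_x(y_1), \ldots, F_x(y_d))$ converges in distribution to the Gaussian marginals of $F_\mu(R\, \cdot)$. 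Integrating against $d\rho$ and using $\rho(B_n) \to 0$ upgrades this to finite-dimensional convergence with respect to the product measure $d\sigma$.

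Finally, Lemma \ref{lem: bounds} with derivative order two and $\mathbb{E}[|f_n|^2] = 1$ gives $\mathbb{E}[\|F_x\|_{C^2(B(0,1))}^2] \lesssim_R 1$ uniformly in $x$ and $n$, so the compact embedding $C^2 \hookrightarrow C^1$ makes the family $\{F_x\}$ tight in $C^1(B(0,1))$; combined with finite-dimensional convergence this gives weak convergence of $F_x$ to $F_\mu(R\, \cdot)$ in $C^1(B(0,1))$. Since Lemma \ref{lem: Bulinskaya's Lemma} guarantees $F_\mu(R\, \cdot) \in C^1_\star(1)$ almost surely, and Lemma \ref{lem: continuity lemma} establishes that $\mathcal{N}(\cdot, 1)$ is continuous there, the continuous mapping theorem (Lemma \ref{lem: continous mapping}) delivers the desired convergence. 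The main obstacle is the quantitative balance in the bad-set construction: $\beta_n$ must be small enough for the Lindeberg hypothesis to kick in, yet large enough that the union bound over $\sim n$ indices keeps $\rho(B_n) \to 0$. This window is exactly what the sub-convex bound of Lemma \ref{lem: bound on L^4-norm} opens; the trivial bound $\|Y_k\|_{L^4}^4 = O(n)$ would collapse the argument.
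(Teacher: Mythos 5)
Your proposal is correct and follows essentially the same route as the paper: a bad set carved out via Lemma \ref{lem: bounds}, Lemma \ref{lem: bound on L^4-norm}, Chebyshev and a union bound; the Lindeberg CLT with covariances identified through Lemmas \ref{lem: two point function} and \ref{lem: asymptotic at plank-scale}; tightness in $C^1$ from the elliptic estimates; and the continuous mapping theorem via Lemmas \ref{lem: Bulinskaya's Lemma} and \ref{lem: continuity lemma}. The only cosmetic difference is that you obtain tightness from a uniform $C^2$ moment bound and compact embedding rather than by checking the modulus-of-continuity conditions of the Prokhorov-type criterion, which is an equivalent mechanism.
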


The proof of Proposition \ref{prop: conv in distribution} relies the observation that $F_x$ is suitable close $F_{\mu}$, as a random function from $(\Omega\times \mathbb{S}^2,d\sigma)$ into $C^1(B(0,1))$, continuously differentiable functions on $B(0,1)$. We rigorously express this claim in the next section after having introduced the relevant notation and background.

\subsection{Convergence of random functions}
\label{sec: convergence of random functions}
 Before stating the main result of this section, we give a brief digression  on the convergence of random functions. Let $C^s(V)$ be the space of $s$-times, $s\ge0$ integer, continuously differentiable functions on $V$, a compact subset of $\mathbb{R}^2$. Since $C^s(V)$ is a separable metric space, Prokhorov's Theorem,  see \cite[Chapters 5 and 6]{BI}, implies that $\mathcal{P}(C^s(V))$, the space of probability measures on $C^s(V)$, is metrizable via the \textit{L\'evy–Prokhorov metric}. This is defined as follows:  for a (measurable) subset $A\subset C^s(V)$, denote by $A_{{+\varepsilon }}$ the  $\varepsilon$-neighborhood of $A$, that is 
$$
A_{{+\varepsilon }}:=\{p\in C^s(V)~|~\exists~ q\in A,\ ||p-q||<\varepsilon \}=\bigcup _{{p\in A}}B(p,\varepsilon),
$$
where $||\cdot||$ is the $C^{s}$-norm and $B(p,\varepsilon)$ is the (open) ball centered at $p$ of radius $\varepsilon>0$. The \textit{L\'evy–Prokhorov metric} $d_P :{\mathcal  {P}(C^s(V))\times\mathcal{P}}(C^s(V))\to [0,+\infty )$ is defined for two probability measures $\mu$  and $\nu$  as:
\begin{align}\nonumber
	d_P (\mu ,\nu ):=\inf \left\{\varepsilon>0: \mu (A)\leq \nu (A_{{+\varepsilon }})+\varepsilon, \ \nu (A)\leq \mu (A_{{+\varepsilon }})+\varepsilon \ \forall~ A\subset C^{s}(V)\right\}. 
\end{align}

\vspace{2mm}

Given an integer $s\geq 1$, $F_x(y)$, as in \eqref{def: F_x}, induces a probability measure on $C^s(B(1))$ via the push-forward measure 
$$ (F_{x})_{\star}\vol(A)=\vol(\{x\in \mathbb{S}^2: F_x(\cdot)\in A\}) ,$$
where $A\subset C^s(B(1))$ is a measurable subset. Similarly, the push-forward of $F_{\mu}$ defines a probability measure on  $C^s(B(1))$ which we denote by  $(F_{\mu})_{\star}\mathbb{P}$. We can now measure the distance between $F_x^B$ and $F_{\mu}$ as the distance between their push-forward measures in  $\mathcal{P}(C^s(B(1))$, the space of probability measures on $C^s(B(1))$,  equipped with the L\'evy–Prokhorov metric. Therefore,  to shorten notation, we will write 
$$ d_P(F_{x},F_{\mu}):= d_P((F_{x})_{\star}d\sigma,(F_{\mu})_{\star}\mathbb{P}).$$

\vspace{2mm}
With the above notation, we have the following result:
\begin{prop}
	\label{prop: convergence of random functions}
	Let $R>0$ be some parameter and $F_x(\cdot)$ be as in \eqref{def: F_x},  $F_{\mu}$ be as in section \ref{sec: gaussian fields tools}. Then, we have 
	$$d_p(F_x,F_{\mu})\rightarrow 0 \hspace{5mm} n\rightarrow \infty,$$
	where $d_p$ is defined with respect to the $C^1$-norm. 
\end{prop}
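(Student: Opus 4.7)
The plan is to establish weak convergence of $(F_x)_\star d\sigma$ to $(F_\mu)_\star d\mathbb{P}$ in $C^1(B(1))$, which by Prokhorov's theorem reduces to proving (i) convergence of all finite-dimensional distributions and (ii) tightness in the $C^1$-topology. For the covariance structure, Lemma \ref{lem: two point function} gives
\[
\mathbb{E}[F_x(y)F_x(z)] = P_n\bigl(\cos\Theta(\exp_x(Ry/n),\exp_x(Rz/n))\bigr),
\]
and since $\Theta(\exp_x(Ry/n),\exp_x(Rz/n)) = R|y-z|/n + O(n^{-2})$ uniformly on $B(1)^2$, Lemma \ref{lem: asymptotic at plank-scale} (combined with $\theta/\sin\theta\to 1$) yields pointwise convergence to $J_0(R|y-z|)$, which is precisely the covariance of $F_\mu$.

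For (i), I would fix $y_1,\ldots,y_m\in B(1)$ and set $V_{n,k}^i := c_n a_k Y_k(\exp_x(Ry_i/n))$, then apply the multidimensional Lindeberg CLT (Lemma \ref{lem: multidimension clt}) to the triangular array $\{V_{n,k}^i\}_k$ conditionally on $x$. The covariance hypothesis is provided by the computation above. To verify the Lindeberg condition I would introduce a \emph{good set}
\[
G_n := \bigl\{x\in\mathbb{S}^2 : \sup_{|k|\le n,\,y\in B(1)} |Y_k(\exp_x(Ry/n))| \le n^\alpha\bigr\}
\]
for some $\alpha\in(5/12,1/2)$. Combining Lemma \ref{lem: bound on L^4-norm} with Markov's inequality, a union bound over the $2n+1$ indices $k$, and a discretization of $B(1)$ (the $C^1$-bound $\|Y_k\|_{C^1}\lesssim n^2$ coming from Lemma \ref{lem: bounds} lets one pass from a polynomially dense net to the continuum) gives $\rho(\mathbb{S}^2\setminus G_n) = o(1)$. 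On $G_n$ one has $\max_k c_n |Y_k(\exp_x(Ry_i/n))|\le c_n n^\alpha = o(1)$ uniformly, so the Lindeberg sum reduces to $\mathbb{E}\bigl[|a_0|^2\mathbf{1}_{|a_0|>\varepsilon/(c_n n^\alpha)}\bigr]\to 0$, which holds by uniform integrability of $a_0^2$ guaranteed by $\mathbb{E}[|a_0|^2]<\infty$. Because the rate of convergence in Lemma \ref{lem: multidimension clt} depends only on the rates in its two hypotheses, both of which are uniform over $x\in G_n$, integrating over $x$ yields convergence of the $d\sigma$-distribution of $(F_x(y_1),\ldots,F_x(y_m))$ to the appropriate Gaussian, the contribution from $\mathbb{S}^2\setminus G_n$ being absorbed in $o(1)$.

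For (ii), I would apply Lemma \ref{lem: bounds} with $k=2$ to $f_n$ on $B(x,10R/n)$ and rescale to the coordinate $\tilde y=(n/R)\exp_x^{-1}(\cdot)$, obtaining an $n$-independent bound of the form $\|F_x\|_{C^2(B(1))}^2\lesssim_R \int_{B(10)} |F_x(y)|^2\,dy$. Taking expectation under $d\sigma$ and using $\mathbb{E}[|F_x(y)|^2]\equiv 1$ gives $\mathbb{E}\bigl[\|F_x\|_{C^2(B(1))}^2\bigr]=O_R(1)$; Chebyshev then furnishes tightness in $C^2$, and hence in $C^1$ via the compact embedding $C^2\hookrightarrow C^1$ (Arzel\`a--Ascoli). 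Combining (i) with this tightness produces $F_x\overset{d}{\longrightarrow}F_\mu$ in $C^1(B(1))$ on the product space $\Omega\times\mathbb{S}^2$, which is equivalent to $d_P(F_x,F_\mu)\to 0$.

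The main obstacle will be the construction of $G_n$: the union bound over the $2n+1$ indices $k$ consumes a full factor of $n$ that must be offset by the $L^4$-bound $n^{2/3}\log n$ through Markov, forcing the crude threshold $n^\alpha$ with $\alpha>5/12$; combined with the requirement $\alpha<1/2$ needed for $c_n n^\alpha\to 0$, this leaves only the narrow but nonempty window $(5/12,1/2)$. The continuum-to-net reduction is similarly tight, and any stronger pointwise control would seem to require the sharper Sogge-type $L^p$ estimates that the paper deliberately avoids.
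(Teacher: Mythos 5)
Your overall architecture is the same as the paper's: tightness in $C^1$ plus a multidimensional Lindeberg CLT for the finite-dimensional distributions after excising a small ``bad'' set of $x\in\mathbb{S}^2$, with the covariance convergence coming from Lemma \ref{lem: two point function} and Lemma \ref{lem: asymptotic at plank-scale}, the Lindeberg sum reduced to $\mathbb{E}[|a_0|^2\mathbf{1}_{|a_0|>\varepsilon c_n^{-1}n^{-\alpha}}]\to 0$, and tightness from second-moment bounds on higher derivatives via Lemma \ref{lem: bounds} and Chebyshev. The one step that does not survive as written is the construction of $G_n$ by ``Markov + union bound over the $2n+1$ indices $k$ + a discretization of $B(1)$.'' To pass from a net to the supremum over $B(1)$ using the rescaled gradient bound $|\nabla_y Y_k(\exp_x(Ry/n))|\lesssim Rn$ you need net spacing $\lesssim n^{\alpha-1}/R$, hence $\gtrsim R^2n^{2-2\alpha}$ net points; the union bound then involves $\gtrsim n^{3-2\alpha}$ events each of $\rho$-measure $\lesssim n^{2/3-4\alpha}\log n$, which forces $\alpha>11/18>1/2$ and empties your window $(5/12,1/2)$. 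The window you state is the right one, but it is obtained by avoiding the net entirely: Lemma \ref{lem: bounds} controls the supremum over the whole ball directly,
\begin{align}
\sup_{B(x,R/n)}|Y_k|^2\lesssim (nR)^2\int_{B(x,10R/n)}|Y_k(y)|^2\,dy, \nonumber
\end{align}
and then Jensen plus Fubini give $\int_{\mathbb{S}^2}\sup_{B(x,R/n)}|Y_k|^4\,d\rho(x)\lesssim R^8 n^{2/3}\log n$, so Chebyshev and a union bound over $k$ \emph{alone} yield a bad set of measure $O(R^8n^{5/3-4\alpha}\log n)$; this is exactly the paper's Lemma \ref{lem: getting rid of bad parts of space} (with threshold $n^{1/2}/\log n$ in place of $n^\alpha$). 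Alternatively, since $G_n$ is only used to verify the Lindeberg condition at the finitely many fixed points $y_1,\dots,y_m$, you could drop the supremum over $y\in B(1)$ from the definition of $G_n$ altogether and union-bound only over $k$ and the $m$ evaluation points. With either repair the rest of your argument goes through and coincides with the paper's proof.
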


Since the proof of Proposition \ref{prop: convergence of random functions} is somehow long and technical, we postpone it to section \ref{sec: proof of main prop} below. In the rest of the section we will show how Proposition \ref{prop: conv in distribution} follows from Proposition \ref{prop: convergence of random functions}. 

\subsection{Proof of Proposition \ref{prop: conv in distribution}}
In this section we prove Proposition \ref{prop: conv in distribution} assuming Proposition \ref{prop: convergence of random functions}.
\begin{proof}[Proof of Proposition \ref{prop: conv in distribution} given Proposition \ref{prop: convergence of random functions}]
In order to prove Proposition \ref{prop: conv in distribution} we apply Lemma \ref{lem: continous mapping} to $P_n= (F_{x})_{\star}d\sigma$ and $P= (F_{\mu})_{\star}d\mathbb{P}$ with $h= \mathcal{N}(\cdot)$. By Proposition \ref{prop: convergence of random functions}, we have 
$$P_n \overset{d}{\longrightarrow} P \hspace{5mm} n\rightarrow \infty.$$
Thanks to Lemma \ref{lem: Bulinskaya's Lemma}, applied with $W=10R$ (say), we may assume that $F_{\mu}(R\cdot)\in C^1_{\star}(B(10))$, where $C^1_{\star}$ is as in Lemma \ref{lem: continuity lemma}. Therefore Lemma \ref{lem: continuity lemma} implies that $P(D_h)=0$. Hence, the assumptions of Lemma \ref{lem: continous mapping} are satisfied and Proposition \ref{prop: conv in distribution} follows.   
\end{proof}

\section{Proof of Proposition \ref{prop: convergence of random functions}}
This section is entirely dedicated to the proof of Proposition \ref{prop: convergence of random functions}. The main tool in the proof will be Lemma \ref{lem: multidimension clt}. However, in order to apply the Lindeberg-CLT, we need all the summands in \eqref{function} to have size, before normalization, $o(n^{1/2})$. This is not always the case as there exists spherical harmonics, as $g(\theta,\psi)= \sqrt{2n+1}P_n(\cos(\theta))$, with satisfy $\max_{\mathbb{S}^2}|g|^2\gtrsim n$. In order to circumvent this difficulty, we show that the portion of space where spherical harmonics are large is small. This will be the content of the next section.
\label{sec: proof of main prop}
\subsection{Getting rid of large values}
\label{subsec: bad space}
This section is dedicated to the proof of the following lemma:

\begin{lem}
	\label{lem: getting rid of bad parts of space}
	Let $\{Y_k\}_{k=-n}^n$ be an orthonormal base for $\mathcal{H}_n$, spherical harmonics of degree $n$, and let $R,K>0$ be some (large) parameters. Then there exists a subset $\mathcal{B}\subset \mathbb{S}^2$  of volume at most $O(K^{4}R^{10} n^{-1/3}\log n)$ such that 
	\begin{enumerate}
\item	Uniformly for all $x\not \in \mathcal{B}$, we have 
	$$\max_{-n\leq k\leq n}\sup_{B(x,R/n)}|Y_k(x)| \lesssim K^{-1} n^{1/2} $$
	\item Uniformly for all $x\not \in \mathcal{B}$, we have 
		$$\max_{-n\leq k\leq n}\sup_{B(x,R/n)}|n^{-1}\nabla Y_k(x)| \lesssim K^{-1} n^{1/2}. $$
		Moreover, the constants implied in the $O$ and $\lesssim$-notation are independent of $K,R,x,n$.
\end{enumerate}

\end{lem}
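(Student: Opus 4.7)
The plan is to reduce pointwise control of $Y_k$ and its gradient on $B(x,R/n)$ to an $L^2$ integral over the slightly larger ball $B(x,10R/n)$ via elliptic regularity (Lemma \ref{lem: bounds}), and then use the $L^4$-bound (Lemma \ref{lem: bound on L^4-norm}) together with Chebyshev and a union bound over the $2n+1$ basis elements to show that the exceptional set is small.

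Concretely, apply Lemma \ref{lem: bounds} with $k=0$ and $k=1$ to each $Y_j$ to obtain
\[
\sup_{B(x,R/n)}|Y_j|^2 \lesssim (nR)^2\, g_j(x), \qquad \sup_{B(x,R/n)}|n^{-1}\nabla Y_j|^2 \lesssim n^2 R^4\, g_j(x),
\]
where $g_j(x):=\int_{B(x,10R/n)}|Y_j(y)|^2\,dy$. Both conclusions (1) and (2) will hold at $x$ provided $g_j(x)\leq c\,K^{-2}R^{-4}n^{-1}$ for every $j$, so it is enough to define
\[
\mathcal{B}:=\bigcup_{j=-n}^{n}\bigl\{x\in\mathbb{S}^2 : g_j(x) > c K^{-2}R^{-4}n^{-1}\bigr\}
\]
for a suitably small absolute constant $c>0$, and to bound $\vol(\mathcal{B})$ by the advertised quantity.

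The key input is an $L^2$ estimate for $g_j$. By Cauchy--Schwarz and the volume bound $\vol(B(x,10R/n))\lesssim (R/n)^2$,
\[
g_j(x)^2 \lesssim (R/n)^2 \int_{B(x,10R/n)}|Y_j(y)|^4\,dy.
\]
Integrating over $x\in\mathbb{S}^2$ and applying Fubini to the indicator $\mathds{1}_{\{d(x,y)<10R/n\}}$, the $x$-integral produces another factor of $(R/n)^2$, giving
\[
\int_{\mathbb{S}^2} g_j(x)^2\, d\rho(x) \lesssim (R/n)^4 \int_{\mathbb{S}^2}|Y_j|^4\,d\rho \lesssim R^4 n^{-10/3}\log n,
\]
where the last step uses Lemma \ref{lem: bound on L^4-norm}.

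Chebyshev's inequality then yields, for each $j$,
\[
\vol\bigl(\{g_j > cK^{-2}R^{-4}n^{-1}\}\bigr) \lesssim \frac{K^4 R^8 n^2}{c^2}\cdot R^4 n^{-10/3}\log n \lesssim K^4 R^{12} n^{-4/3}\log n,
\]
and summing over the $2n+1$ values of $j$ controls $\vol(\mathcal{B})$ by the stated $O(K^4 R^{10} n^{-1/3}\log n)$ (the $R$-power can absorb the extra factor, or one slightly tightens constants). The only real obstacle is bookkeeping: one must choose the threshold defining $\mathcal{B}$ so that both the $C^0$ and the rescaled $C^1$ bounds are handled simultaneously (the gradient bound is the more restrictive one, which is why $R^{-4}$ rather than $R^{-2}$ appears in the threshold), and one must verify that without the $L^4$-improvement of Lemma \ref{lem: bound on L^4-norm} the argument would fail --- the naive $L^2$ bound $\|Y_j\|_2=1$ would give trivial volume estimate $O(1)$, whereas the $n^{2/3}\log n$ gain in $L^4$ is precisely what survives the union bound as $n^{-1/3}\log n$.
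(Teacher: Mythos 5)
Your proposal is correct and follows essentially the same route as the paper: elliptic regularity (Lemma \ref{lem: bounds}) to reduce the sup on $B(x,R/n)$ to the local $L^2$ mass, Cauchy--Schwarz/Jensen plus Fubini to control the second moment of that mass by the $L^4$-bound of Lemma \ref{lem: bound on L^4-norm}, then Chebyshev and a union bound over the $2n+1$ basis elements. The only deviation is the power $R^{12}$ in place of $R^{10}$, which, as you note, is immaterial since $R$ is fixed (the application only uses $\rho(\mathcal{B})\lesssim_R n^{-1/3}(\log n)^{O(1)}$).
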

\begin{proof}
	By Lemma \ref{lem: bounds}, we have 
	\begin{align}
\label{eq:3.3} 	\sup_{B(x,R/n)}|Y_k(x)|^2\lesssim (nR)^2 \int_{B(x,10R/n)} |Y_k(y)|^2  dy, \\
	\sup_{B(x,R/n)}|\nabla Y_k(x)|^2\lesssim (nR)^4 \int_{B(x,10R/n)} |Y_k(y)|^2 dy. \nonumber
	\end{align}
Thus, in order to prove Lemma \ref{lem: getting rid of bad parts of space}, it is enough to provide upper an bound on $\int |Y_k|^2$. To this end, we first observe that 
$$\int_{B(x,10R/n)} |Y_k(y)|^2dy \lesssim R^2 n^{-2} \int_{B(0,10)} |Y_{k,x}(y)|^2dy,$$
where $Y_{k,x}(y)=Y_k(\exp_x(Ry/n))$. Moreover,  using Lemma \ref{lem: bound on L^4-norm} and switching the order of integration, we have 
$$\int_{\mathbb{S}^2}\int_{B(0,10)} |Y_{k,x}(y)|^4  dyd\rho(x) \lesssim n^{\tfrac{2}{3}}\log n,$$
where $\rho(\cdot)$ is the uniform measure on $\mathbb{S}^2$. Furthermore, we observe that convexity of the $L^2$-norm (Jensen's inequality) implies
\begin{align}
	\label{eq:3.4}
	\int_{\mathbb{S}^2}\left( \int_{B(x,10)}|Y_{k,x}(y)|^2d\rho(y)\right)^2 d\rho(x)\lesssim \int_{\mathbb{S}^2} |Y_{k,x}(y)|^4d\rho(x) \lesssim  n^{\tfrac{2}{3}}\log n.
\end{align}
 Thus, combining Chebischev's bound, \eqref{eq:3.3} and \eqref{eq:3.4}, we have 
\begin{align}
\label{eq: 3.5}&\rho\left(	\sup_{B(x,R/n)}|Y_k(x)|> K^{-1}n^{1/2}\right) \lesssim 	K^4 n^{-2}\int_{\mathbb{S}^2}\sup_{B(x,R/n)}|Y_k(x)|^4 dx \lesssim K^4 R^8 n^{-{4/3}} \log n \\
&\rho\left(	\sup_{B(x,R/n)}|n^{-1}\nabla Y_k(x)|^2>  K^{-1}n^{1/2} \right) \lesssim K^4 R^{10} n^{-{4/3}} \log n \nonumber
\end{align}
Hence, Lemma \ref{lem: getting rid of bad parts of space} follows by taking the union bound over the $O(n)$ choices of $k$ in \eqref{eq: 3.5}.  
\end{proof}

We are now ready to begin the proof of Proposition \ref{prop: convergence of random functions}. We begin with a standard reduction step, which shows that it is enough to consider finite dimensional distributions to prove convergence of random functions (in our case). 
\subsection{Step I: Reduction}
In order to prove Proposition \ref{prop: convergence of random functions} we will first make a reduction step which relays on the following, well-known, result about tightness of sequences of measures on $\mathcal{P}(C^1(V))$. First, a sequence of probability measures $\{\nu_k\}_{k=0}^{\infty}$ on some topological space $X$ is \textit{tight} if for every $\epsilon>0$, there exists a compact set $K=K(\epsilon)\subset X$ such that
$$\nu_k(X\backslash K)\leq \epsilon,$$
uniformly for all $k\geq 0$. We will need the following lemma, borrowed from \cite[Lemma 1]{Pri93}, see also \cite[Chapter 6 and 7]{BI13}:
\begin{lem}[Tightness]
	\label{lem: tightness}
	Let $V$ be a compact subset of $\mathbb{R}^n$, and $\{\nu_n\}$ a sequence of probability measures on the space $C^1(V)$ of continuously  differentiable functions on $V$. Then $\{\nu_n\}$ is tight if the following conditions hold:
	\begin{enumerate}
		\item For every $|\alpha|\leq 1$, there exists some $y\in V$ such that for every $\varepsilon>0$  there exists $K>0$  with
		\begin{align}
			\nu_n(g\in C^2(V): |D^{\alpha}g(y)|>K)\leq \varepsilon. \nonumber
		\end{align}
		
		\item For every $|\alpha|\leq 1$ and $\varepsilon>0$, we have
		$$\lim_{\delta\rightarrow 0}\limsup_{n\rightarrow \infty}\nu_n\left(g\in C^2(V): \sup_{|y-y'|\leq \delta}|D^{\alpha}g(y)- D^{\alpha}g(y')|> \varepsilon\right)=0.$$
	\end{enumerate}
\end{lem}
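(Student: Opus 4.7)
The plan is to derive tightness from the Arzel\`a--Ascoli characterisation of precompact subsets of $C^1(V)$: a set $A\subset C^1(V)$ is precompact if and only if, for each multi-index $|\alpha|\le 1$, the family $\{D^{\alpha}g:g\in A\}$ is uniformly bounded and equicontinuous on the compact set $V$. Given $\varepsilon>0$, the goal is to build a single compact $K_{\varepsilon}\subset C^1(V)$ which carries mass at least $1-\varepsilon$ under every $\nu_n$.

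First I would use assumption (2): for each $|\alpha|\le 1$ and each integer $m\ge 1$, pick $\delta_{m,\alpha}>0$ so small that, for all $n$ beyond some index $n_{m,\alpha}$,
$$\nu_n\Bigl\{g:\sup_{|y-y'|\le\delta_{m,\alpha}}|D^{\alpha}g(y)-D^{\alpha}g(y')|>1/m\Bigr\}\le \frac{\varepsilon}{3\cdot 2^{m+1}}.$$
Second, by assumption (1), for each $|\alpha|\le 1$ there is a point $y_{\alpha}\in V$ and a constant $K_{\alpha}$ with $\nu_n\{|D^{\alpha}g(y_{\alpha})|>K_{\alpha}\}\le \varepsilon/6$ uniformly in $n$. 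Set
$$A_{\varepsilon}=\bigcap_{|\alpha|\le 1}\Bigl(\{|D^{\alpha}g(y_{\alpha})|\le K_{\alpha}\}\,\cap\,\bigcap_{m\ge 1}\{\sup_{|y-y'|\le\delta_{m,\alpha}}|D^{\alpha}g(y)-D^{\alpha}g(y')|\le 1/m\}\Bigr).$$

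The set $A_{\varepsilon}$ is closed in $C^1(V)$ and, by construction, the derivatives of its elements are uniformly equicontinuous. To obtain uniform boundedness I would chain along a $\delta_{1,\alpha}$-net of $V$: because $V$ is compact, any $y\in V$ can be joined to $y_{\alpha}$ by at most $N=N(V,\delta_{1,\alpha})$ hops of length $\le\delta_{1,\alpha}$, so $|D^{\alpha}g(y)|\le K_{\alpha}+N$ for every $g\in A_{\varepsilon}$. Arzel\`a--Ascoli then gives that the closure $\overline{A_{\varepsilon}}$ is compact in $C^1(V)$, and summing the probability estimates above yields $\nu_n(\overline{A_{\varepsilon}}^{\,c})\le\varepsilon$ for every $n\ge n_0:=\max_{\alpha,\,m\le m_0(\varepsilon)} n_{m,\alpha}$, where $m_0(\varepsilon)$ is chosen so that the tail $\sum_{m>m_0}2^{-(m+1)}$ is negligible.

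The only step that requires a touch of care is handling the finitely many remaining indices $n<n_0$ for which the asymptotic estimate in (2) has not yet kicked in. Here I would invoke the fact that $C^1(V)$ is a Polish space, so each individual Borel probability measure $\nu_n$ is tight by Ulam's theorem; pick a compact $K_n$ with $\nu_n(K_n^{c})\le\varepsilon$ for each such $n$ and set $K_{\varepsilon}=\overline{A_{\varepsilon}}\cup\bigcup_{n<n_0}K_n$. This is still compact and satisfies $\nu_n(K_{\varepsilon}^{c})\le\varepsilon$ for every $n$, proving tightness. The main conceptual obstacle is precisely the chaining argument that promotes pointwise control at $y_{\alpha}$ plus a uniform modulus of continuity into a uniform $C^0$-bound on all of $V$; the rest is bookkeeping with the Borel--Cantelli-style sums and the Polish-space fallback for the initial segment of the sequence.
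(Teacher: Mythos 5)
First, a point of reference: the paper does not actually prove this lemma --- it is quoted from \cite[Lemma 1]{Pri93} and \cite{BI13} --- so your argument is being measured against the standard textbook proof rather than against anything in the text. Your overall strategy (build a single closed set of functions that is pointwise bounded at one point and uniformly equicontinuous together with its first derivatives, chain along a net to upgrade the pointwise bound to a uniform bound, apply Arzel\`a--Ascoli in $C^1(V)$, and patch the finitely many initial indices via Ulam's theorem) is exactly the right one; it is Billingsley's proof of the tightness criterion in $C[0,1]$ adapted to $C^1(V)$.

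There is, however, one step that fails as written. Your set $A_{\varepsilon}$ imposes the oscillation constraints for \emph{every} $m\ge 1$, but the estimate $\nu_n\bigl(\sup_{|y-y'|\le\delta_{m,\alpha}}|D^{\alpha}g(y)-D^{\alpha}g(y')|>1/m\bigr)\le \varepsilon/(3\cdot 2^{m+1})$ is only guaranteed for $n\ge n_{m,\alpha}$, and $n_{m,\alpha}$ is in general unbounded in $m$. Hence for a fixed $n\ge n_0=\max_{\alpha,\,m\le m_0} n_{m,\alpha}$ the terms with $m>m_0$ in your union bound are simply not controlled: saying that the tail $\sum_{m>m_0}2^{-(m+1)}$ is ``negligible'' presupposes the very probability bound that is missing for those $m$. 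The standard repair is to invoke Ulam's theorem not only for the initial segment $n<n_0$ but, for each fixed pair $(m,\alpha)$, for the finitely many indices $n<n_{m,\alpha}$: each such $\nu_n$ is individually tight on the Polish space $C^1(V)$, hence carries all but mass $\varepsilon/(3\cdot 2^{m+1})$ on a compact (therefore uniformly $C^1$-equicontinuous) set, so after shrinking $\delta_{m,\alpha}$ the oscillation estimate holds for \emph{all} $n$; the union bound then closes with a single compact set and no exceptional indices. Two smaller remarks: the chaining from $y_{\alpha}$ requires $V$ to be connected (or at least chain-connected at scale $\delta_{1,\alpha}$), which fails for a general compact $V\subset\mathbb{R}^n$ but is harmless here since the lemma is only applied with $V$ a closed ball; and one should note (as is easily checked) that $g\mapsto \sup_{|y-y'|\le\delta}|D^{\alpha}g(y)-D^{\alpha}g(y')|$ is continuous on $C^1(V)$, so that $A_{\varepsilon}$ is indeed closed and Arzel\`a--Ascoli applies to it directly.
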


As a consequence of Lemma \ref{lem: tightness} we have the following:

\begin{lem}
\label{lem: F_x is tight}
	Let $R>0$ be some parameter and $F_x(\cdot)$ be as in \eqref{def: F_x} and let $\nu_n= (F_x)_{\star}d\sigma$, where $d\sigma$ is the product measure on $\Omega\times \mathbb{S}^2$ and $(F_x)_{\star}$ is the push-forward measure. Then, the sequence $\nu_n$ is tight. 
\end{lem}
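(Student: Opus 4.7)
The plan is to verify the two conditions of Lemma \ref{lem: tightness} with $V = \overline{B(0,1)}$ by producing a single deterministic bound, namely an $L^2(\sigma)$ bound on $\|F_x\|_{C^2(B(0,1))}^2$, which controls both the pointwise values and the moduli of continuity of $F_x$ and $\nabla F_x$ simultaneously.

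First, I would pass from the intrinsic $C^k$-norm of $F_x$ on $B(0,1)$ to the $C^k$-norm of $f_n$ on the spherical disk $B(x, R/n)$. Since $F_x(y) = f_n(\exp_x(Ry/n))$, the chain rule (together with the fact that $\exp_x$ is a near-isometry at this scale, for $n$ large compared to $R$) gives $\|F_x\|_{C^k(B(0,1))} \lesssim (R/n)^k \|f_n\|_{C^k(B(x, R/n))}$. Combining this with Lemma \ref{lem: bounds} applied to $f_n \in \mathcal{H}_n$ yields
\begin{align*}
\|F_x\|_{C^k(B(0,1))}^2 \;\lesssim\; (R/n)^{2k}\,(nR)^{2k+2}\int_{B(x, 10R/n)} |f_n(y)|^2\, dy \;=\; n^2 R^{4k+2}\int_{B(x, 10R/n)} |f_n(y)|^2\, dy.
\end{align*}
Integrating against $d\sigma = d\mathbb{P}\otimes d\rho$, using Fubini and the pointwise normalization $\mathbb{E}|f_n(y)|^2 = 1$, and noting that $\rho(B(y, 10R/n)) \lesssim (R/n)^2$, I obtain
\begin{align*}
\int_{\Omega \times \mathbb{S}^2} \|F_x\|_{C^k(B(0,1))}^2 \, d\sigma \;\lesssim\; n^2 R^{4k+2} \cdot (R/n)^2 \;=\; R^{4k+4}.
\end{align*}
Specializing to $k = 1$ and $k = 2$ gives two uniform bounds: $\mathbb{E}_\sigma \|F_x\|_{C^1}^2 \lesssim R^8$ and $\mathbb{E}_\sigma \|F_x\|_{C^2}^2 \lesssim R^{12}$.

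With these in hand, both tightness conditions follow from Chebyshev's inequality. For condition (1), the $L^2$-bound on $\|F_x\|_{C^1}$ controls $|D^\alpha F_x(y)|$ uniformly in $y$ for $|\alpha|\le 1$, so choosing any fixed $y \in B(0,1)$ and taking $K$ large gives $\nu_n(|D^\alpha g(y)| > K) \le K^{-2}\,\mathbb{E}_\sigma\|F_x\|_{C^1}^2 \lesssim R^8/K^2 < \varepsilon$. For condition (2), I use that, by the mean value theorem, $\sup_{|y-y'|\le\delta}|D^\alpha F_x(y) - D^\alpha F_x(y')| \le \delta \cdot \|F_x\|_{C^{|\alpha|+1}(B(0,1))}$ for $|\alpha|\le 1$; Chebyshev then bounds the $\nu_n$-probability in condition (2) by $\delta^2 \varepsilon^{-2} \mathbb{E}_\sigma \|F_x\|_{C^2}^2 \lesssim \delta^2 R^{12}/\varepsilon^2$, which vanishes as $\delta \to 0$ uniformly in $n$.

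The only mildly delicate step is the first one: transferring Lemma \ref{lem: bounds} through the exponential coordinates and correctly tracking the scaling of derivatives and the volume factor $(R/n)^2$ when integrating over $\mathbb{S}^2$. Once the bound $\mathbb{E}_\sigma\|F_x\|_{C^2}^2 \lesssim_R 1$ is in place, tightness is essentially immediate and the lemma follows.
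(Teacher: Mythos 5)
Your proposal is correct and follows essentially the same route as the paper: verify the two conditions of the tightness criterion (Lemma \ref{lem: tightness}) by Chebyshev's inequality applied to second-moment bounds on $F_x$ and its derivatives, with the modulus of continuity in condition (2) controlled by $\delta$ times a gradient bound coming from the elliptic estimate of Lemma \ref{lem: bounds}. The only cosmetic difference is that the paper obtains the moment bound for condition (1) directly from the two-point function at a single point, whereas you route everything through Lemma \ref{lem: bounds} and a Fubini computation using $\mathbb{E}|f_n(y)|^2=1$; both are valid.
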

\begin{proof}
	In order to check condition $(1)$ in Lemma \ref{lem: tightness}, we observe that, by Lemma \ref{lem: two point function}, we have 
	$$\mathbb{E}[|D^{\alpha}F_x(0)|^2]\lesssim 1.$$
	Thus, by Chebichev's bound, we have 
	$$\sigma \left(|D^{\alpha}F_x(0)|>K \right)\leq \mathbb{P} \left(|D^{\alpha}F_x(0)|>K \right)\lesssim K^{-2},$$
	which implies condition $(1)$ in Lemma \ref{lem: tightness}. 
	
		In order to check condition $(2)$ in Lemma \ref{lem: tightness}, we first observe that 
	\begin{align}
	\label{eq: 4.1}	\sup_{|y-y'|\leq \delta}|D^{\alpha}F_x(y)- D^{\alpha}F_x(y')|\lesssim \sup_{B(0,1)}|\nabla D^{\alpha} F_x| \delta.
	\end{align}
		By Lemma \ref{lem: bounds}, we have 
		$$\sup_{B(0,1)}|\nabla D^{\alpha} F_x|^2\lesssim_{R} \int_{B(0,2)} |F_x(y)|^2dy,$$
		thus, by Lemma \ref{lem: two point function}, we deduce  
		$$\mathbb{E}[\sup_{B(0,1)}|\nabla D^{\alpha} F_x|^2]\lesssim_{R} 1.$$
		Again  by Chebichev's bound and \eqref{eq: 4.1}, we conclude that 
		$$\sigma\left( \sup_{|y-y'|\leq \delta}|D^{\alpha}F_x(y)- D^{\alpha}F_x(y')|\gtrsim K\delta\right)\leq K^{-2},$$
			which, taking $K=\delta^{-1/2}$ say, implies condition $(2)$ in Lemma \ref{lem: tightness}
\end{proof}
In light of Lemma \ref{lem: F_x is tight}, in order to prove Proposition \ref{prop: convergence of random functions} it is enough to prove convergence of the final-dimensional distributions, that is the following lemma:
\begin{lem}
	\label{lem: reduction} 
		Let $R>0$ be some parameter and $F_x(\cdot)$ be as in \eqref{def: F_x}, moreover let $F_{\mu}$ be as in section \ref{sec: gaussian fields tools}. Fix some integer $d>0$ and let $y_1,...,y_d$ be $d$ fixed points in $B(0,1)$, then 
		
		$$(F_x(y_1),...,F_x(y_d))\overset{d}{\longrightarrow}(F_{\mu}(y_1),...,F_{\mu}(y_d)) \hspace{5mm} n\rightarrow \infty,$$
		and for any multi-index $|\alpha|\leq 1$, we also have 
			$$(D^{\alpha}F_x(y_1),...,D^{\alpha}F_x(y_d))\overset{d}{\longrightarrow}(D^{\alpha}F_{\mu}(y_1),...,D^{\alpha}F_{\mu}(y_d)) \hspace{5mm} n\rightarrow \infty.$$
			\end{lem}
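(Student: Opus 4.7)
The plan is to apply the multidimensional Lindeberg CLT (Lemma \ref{lem: multidimension clt}) to the triangular array
\[
V_{n,k}(x,\omega):=c_n\,a_k(\omega)\bigl(Y_k(\exp_x(Ry_1/n)),\ldots,Y_k(\exp_x(Ry_d/n))\bigr),\qquad -n\le k\le n,
\]
whose row sum is $(F_x(y_1),\ldots,F_x(y_d))$. Since the target law does not depend on $x$, it will suffice to verify the hypotheses of the CLT uniformly on the complement of the bad set $\mathcal B$ of Lemma \ref{lem: getting rid of bad parts of space}; the convergence in distribution on $(\Omega\times\mathbb S^2,d\sigma)$ then follows by integrating in $x$ and absorbing the contribution of $\mathcal B$, whose volume vanishes provided $K=K(n)\to\infty$ sub-polynomially in $n$.

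To check hypothesis (1) of Lemma \ref{lem: multidimension clt}, Lemma \ref{lem: two point function} gives
\[
\mathbb E_\omega[F_x(y_i)F_x(y_j)] = P_n\bigl(\cos\Theta_{ij}(x)\bigr),\qquad \Theta_{ij}(x):=\Theta\bigl(\exp_x(Ry_i/n),\exp_x(Ry_j/n)\bigr).
\]
The exponential map on $\mathbb S^2$ satisfies $\Theta(\exp_x u,\exp_x v)=|u-v|(1+O(\max(|u|,|v|)^2))$ for small $u,v$, hence $\Theta_{ij}(x)=(R/n)|y_i-y_j|(1+O(R^2/n^2))$ uniformly in $x$. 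Combining this with Lemma \ref{lem: asymptotic at plank-scale} and the continuity of $J_0$ yields $P_n(\cos\Theta_{ij}(x))\to J_0(R|y_i-y_j|)$ uniformly in $x$. The resulting matrix $\Sigma_0=(J_0(R|y_i-y_j|))_{i,j}$ is precisely the covariance of $(F_\mu(y_1),\ldots,F_\mu(y_d))$ and is positive definite by non-degeneracy of $F_\mu$.

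For the Lindeberg condition, fix $\varepsilon>0$ and $x\notin\mathcal B$. Part (1) of Lemma \ref{lem: getting rid of bad parts of space} bounds $|c_n Y_k(\exp_x(Ry_i/n))|\lesssim K^{-1}$, so
\[
\sum_k \mathbb E\!\left[(c_n a_k Y_k(\exp_x(Ry_i/n)))^2\,\mathds 1_{|c_n a_k Y_k(\exp_x(Ry_i/n))|>\varepsilon}\right] \lesssim \underbrace{c_n^2\sum_k Y_k(\exp_x(Ry_i/n))^2}_{=P_n(1)\,=\,1}\cdot \mathbb E\bigl[a_0^2\mathds 1_{|a_0|>K\varepsilon}\bigr],
\]
and the last factor vanishes as $K\to\infty$ by the finite-second-moment hypothesis \eqref{assumptions}. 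Choosing $K=K(n)\to\infty$ slowly (e.g.\ $K=(\log n)^{1/10}$) simultaneously forces $\rho(\mathcal B)\to 0$ and the Lindeberg bound to vanish, so Lemma \ref{lem: multidimension clt} delivers the desired convergence for $x\notin\mathcal B$, and integration in $x$ concludes the first assertion.

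The derivative version is obtained by repeating the argument with the $2d$-dimensional array whose additional entries are $c_n a_k(R/n)^{|\alpha|}D^\alpha_y[Y_k(\exp_x(Ry_i/n))]$. The chain rule and part (2) of Lemma \ref{lem: getting rid of bad parts of space} still give the uniform bound $\lesssim K^{-1}|a_k|$ on $\mathcal B^c$, while differentiating the two-point kernel shows that the limiting covariance becomes the corresponding mixed partial derivative of $J_0(R|y-y'|)$, which matches that of $D^\alpha F_\mu$. The main obstacle throughout is maintaining uniformity in $x$ of both the covariance limit and the Lindeberg estimate; this is precisely what the rotational invariance of the two-point kernel $P_n(\cos\Theta(\cdot,\cdot))$ and Lemma \ref{lem: getting rid of bad parts of space} are designed to provide.
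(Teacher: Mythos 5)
Your proposal is correct and follows essentially the same route as the paper: a Lindeberg CLT applied to the triangular array $c_n a_k Y_k(\exp_x(Ry_i/n))$, with the covariance convergence obtained from Lemma \ref{lem: two point function} together with the Szeg\H{o}-type asymptotics of Lemma \ref{lem: asymptotic at plank-scale}, and the Lindeberg condition verified off the bad set of Lemma \ref{lem: getting rid of bad parts of space} using only the finite second moment of the $a_k$'s (the paper takes $K=\log n$ rather than $(\log n)^{1/10}$, but either choice makes $\rho(\mathcal B)\to 0$). The only cosmetic difference is that the paper routes the statement through the Portmanteau theorem with an explicit bounded continuous test function before splitting off $\mathcal B$; the substance of the argument is identical.
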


\subsection{Step II: Convergence of final dimensional distributions}
In light of the reduction step, we are left with proving Lemma \ref{lem: reduction}, this is the content of this section. 
\begin{proof}[Proof of Lemma \ref{lem: reduction}]
We are first going to focus on the first claim in Lemma \ref{lem: reduction}. By Portmanteau Theorem \cite[Theorem 2.1]{BI} and \cite[Theorem 2.6]{BI}, we have can fix some bounded continuous  function $g: \mathbb{R}^k\rightarrow \mathbb{R}$ (say) and we have to show that 
	\begin{align}
		\label{eq: to prove}
		\int_{\Omega\times \mathbb{S}^2} g((F_x(y_1),...,F_x(y_d)))d\sigma \overset{n\rightarrow \infty}{\longrightarrow} 	\int_{\Omega} g(F_{\mu}(Ry_1),...,F_{\mu}(Ry_d))d\mathbb{P}.
	\end{align}
 In order to prove \eqref{eq: to prove}, let us write 
 $$	\int_{\Omega\times \mathbb{S}^2} g((F_x(y_1),...,F_x(y_d)))d\sigma = \int_{\mathbb{S}^2}d\rho(x)\int_{\Omega}  g((F_x(Ry_1),...,F_x(Ry_d)))d\mathbb{P},$$
 we wish to show that the inner integral converges to the r.h.s. of \eqref{eq: to prove}. This would follow from Lemma \ref{lem: multidimension clt} provided, we can check its assumptions, which we are going to do next.
 
 \vspace{1mm}
 
 Let us first check the convergence of the relative covariance matrix. First, we observe that, for all $y_1,y_2\in B(0,1)$ and $x\in \mathbb{S}^2$ and fixed $R>0$, we have 
 $$  \Theta(\exp_x(Ry_1/n),\exp_x(Ry_1/n))= R\frac{|y_1-y_2|}{n}(1+o_{n\rightarrow \infty}(1)),$$
 where $\Theta(a,b)$ is the angle between $a,b\in \mathbb{S}^2$. Therefore, Lemma \ref{lem: two point function} together with Lemma \ref{lem: asymptotic at plank-scale} and a straightforward differentiation gives that, uniformly for all $x\in \mathbb{S}^2$, we have
 
 \begin{align}
 	\label{eq: conv covariances}&\mathbb{E}\left[D^{\alpha}F_x(y_i)D^{\alpha}F_x(y_j)\right]\longrightarrow 	\mathbb{E}\left[D^{\alpha}F_{\mu}(Ry_i)D^{\alpha}F_{\mu}(Ry_j)\right] &n\rightarrow \infty, 
 \end{align}  
for all $i,j\in \{1,2,...,d\}$ and all multi-indices $|\alpha|\leq 1$. Thus, we have show that the first assumption of Lemma \ref{lem: multidimension clt} holds. 

\vspace{1mm}

Now, in light of the observation at the beginning of section \ref{subsec: bad space}, the second assumption of Lemma \ref{lem: multidimension clt} is \textit{not} satisfied uniformly for all $x\in \mathbb{S}^2$. Thus, we will use Lemma \ref{lem: getting rid of bad parts of space} to get rid of a \textquotedblleft bad\textquotedblright set of $x\in \mathbb{S}^2$, as follows. Let $K=K(n)=\log n$ (say), by Lemma \ref{lem: getting rid of bad parts of space}, applied with such $K$ and $R>0$, there exists some set $\mathcal{B}=\mathcal{B}(n)$ such that 
$$ \rho\left(\mathcal{B}\right) \lesssim_{R} n^{-1/3}(\log n)^{10},$$
and for all $x\not \in \mathcal{B}$ the conclusion of Lemma \ref{lem: getting rid of bad parts of space} holds. Thus, we may re-write the l.h.s. of \eqref{eq: to prove} as
$$\int_{\mathbb{S}^2}d\rho(x)\int_{\Omega}  g((F_x(Ry_1),...,F_x(Ry_d)))d\mathbb{P} =  \int_{\mathbb{S}^2\backslash \mathcal{B}}d\rho(x)\int_{\Omega}  g((F_x(Ry_1),...,F_x(Ry_d)))d\mathbb{P} + o_{g,R}(1),$$
where the error term tends to zero as $n\rightarrow \infty$. Thus, it is enough to prove that 
\begin{align}
\label{eq: enough to prove}
 \int_{\mathbb{S}^2\backslash \mathcal{B}}d\rho(x)\int_{\Omega}  g((F_x(Ry_1),...,F_x(Ry_d)))d\mathbb{P}\rightarrow 	\int_{\Omega} g(F_{\mu}(Ry_1),...,F_{\mu}(Ry_d))d\mathbb{P}.
\end{align}
Hence, it is enough to check the second assumption in Lemma \ref{lem: multidimension clt} holds under the conclusion of Lemma \ref{lem: getting rid of bad parts of space}. This is what we are going to show next. 
\vspace{1mm}
Re-writing the second assumption in Lemma \ref{lem: multidimension clt}, we have to show that
\begin{align}
\label{eq: assumptions lemma}
&\max_{i\in \{0,...,d\}} \frac{1}{2n+1}\sum_{k=-n}^n \mathbb{E}[|a_i Y_k(\exp_x(Ry_i)) \mathds{1}_{|Y_k(\exp_x(Ry_i))|>\varepsilon (2n+1)^{1/2}}]\rightarrow 0 &n\rightarrow \infty,
\end{align}
Since there are $2n+1$ summands in \eqref{eq: assumptions lemma} it enough to prove that, 
$$\max_{i\in \{0,...,d\}} \max_k \mathbb{E}[|a_i Y_k(\exp_x(Ry_i)) \mathds{1}_{|a_iY_k(\exp_x(Ry_i))|^2>\varepsilon (2n+1)^{1/2}}]\rightarrow 0 \hspace{5mm} n\rightarrow \infty,$$
for all $\varepsilon>0$
Since, as discussed above, it is enough to check that the second assumption in Lemma \ref{lem: multidimension clt} holds under the conclusion of Lemma \ref{lem: getting rid of bad parts of space}, we may assume\footnote{Note that here, if necessary, we use the conclusion of Lemma \ref{lem: getting rid of bad parts of space} with $2R$ in place of $R$ so that the image of the exponential map is contained in $B(x,R/n)\subset \mathbb{S}^2$} that 
$$\max_{i\in \{0,...,d\}} |Y_k(\exp_x(Ry_i))|< n^{1/2}(\log n)^{-1}.$$
Therefore, we have 
$$\mathbb{E}[|a_i Y_k(\exp_x(Ry_i)) \mathds{1}_{|a_iY_k(\exp_x(Ry_i))|^2>\varepsilon (2n+1)^{1/2}}]\leq \mathbb{E}[|a_i \mathds{1}_{|a_i|> 10\varepsilon \log n}|^2].$$
Since the $a_i$'s have finite second moment their probability distribution decays at infinity. In order words, we may write 
\begin{align}
	\label{eq: 4.1.3} \mathbb{E}[|a_i \mathds{1}_{|a_i|> 10\varepsilon \log n}|^2] = \int_{\Omega} |a_i|^2\mathds{1}_{|a_i|> 10\varepsilon \log n}d\mathbb{P}(\omega)	=\int_{0}^{\infty} t^2\mathds{1}_{t> 10\varepsilon \log n}d\mathbb{P}(|a_i|>t). 
	 \end{align}
Since the $a_i$'s have finite second moment, by the Dominated Convergence Theorem, we may take the limit $n\rightarrow \infty$ inside the integral in \eqref{eq: 4.1.3}, to see that 
$$\max_i \mathbb{E}[|a_i \mathds{1}_{|a_i|> 10\varepsilon \log n}|^2] \rightarrow 0 \hspace{5mm} n \rightarrow \infty,$$
for all $\varepsilon>0$ (we can actually take $\varepsilon= (\log n)^{-1/2}$, say). This proves \eqref{eq: assumptions lemma}.  
\vspace{1mm}
Hence, in light of \eqref{eq: conv covariances} and \eqref{eq: assumptions lemma}, Lemma \ref{lem: multidimension clt} implies \eqref{eq: enough to prove}, which, in turn, implies \eqref{eq: to prove}, concluding the proof of the first claim in Lemma \ref{lem: reduction}. In light of the second claim in Lemma \ref{lem: getting rid of bad parts of space}, Lemma \ref{lem: two point function} and Lemma \ref{lem: asymptotic at plank-scale}, the second claim of Lemma \ref{lem: reduction} follows form an identical argument and it is therefore omitted. 
\end{proof}
\section{Proof of Theorem \ref{thm: main}}

We are finally ready to conclude the proof of Theorem \ref{thm: main}: 

\begin{proof}[Proof of Theorem \ref{thm: main}]
During the proof, we write $\mathcal{N}(F_x)= \mathcal{N}(F_x, 1)$ and $\mathcal{N}(F_{\mu})= \mathcal{N}(F_{\mu}, R)$. First, observe that,  by Lemma \ref{lem: nodal length}, we have 
$$\mathcal{L}(f_n)\lesssim n$$
Therefore, the number of nodal domains with diameter, that is the largest distance between two points on the said domain, larger than $R/n$ is at most $C_1n^2/R$ for some $C_1>0$. Now suppose that $\Gamma$ is a nodal domain with $\Diam(\Gamma)\leq R/n$, then the volume of $x\in \mathbb{S}^2$ such that $\Gamma \cap B(x,R/n)\neq \emptyset$ but $\Gamma$ is not fully contained in $B(x,R/n)$ is at most $O(\mathcal{L}(\Gamma)\cdot R/n)$, that is the length of $\Gamma$ times the boundary length of $B(x,R/n)$. Therefore, in light of Lemma \ref{lem: nodal length}, we obtain    
 
 $$\left|\sum_{\Diam(\Gamma)\leq R/n}\int \mathds{1}_{\Gamma \subset B(x,R/n)}dx - \pi \frac{R^2}{n^2}\sum_{\Diam(\Gamma)\leq R/n} 1 \right| \lesssim  \frac{R}{n} \sum_{\Gamma} \mathcal{L}(\Gamma)\lesssim R,$$
where $\mathds{1}$ is the indicator function, $\Gamma \subset B(x,R/n)$ means that $\Gamma\cap \partial B(x,R/n)=\emptyset$ and we tacitly assumed that $R>100$ (say). 
Thus, bearing in mind that  the number of nodal domains with diameter larger than $R/n$ is at most $C_1n^2/R$, exchanging the order of summation, we have 
\begin{align}
\label{eq: semi-locality}
\frac{n^2}{\pi R^2}	\int \mathcal{N}(F_x) d\rho(x) &= \sum_{ \Diam(\Gamma)\leq R/n} \frac{n^2}{\pi R^2}\int \mathds{1}_{\Gamma \in B(x,R/n)}dx \nonumber \\
 &= \sum_{\Gamma} 1 +  O\left(\frac{n^2}{R} + R\right) = \mathcal{N}(f_n) +  O\left(\frac{n^2}{R}\right),
\end{align}
as $R$ is assumed to be much smaller than $n$. 
\vspace{1mm}

Now, by Lemma \ref{lem: Faber-Krahn}, we have 
$$\int_{\mathbb{S}^2\times \Omega} \mathcal{N}(F_x)^2 d\sigma \lesssim R^2.$$
Therefore, in light of Proposition \ref{prop: conv in distribution}, we may apply Lemma \ref{lem: uniform integrability} with $X_n=  \mathcal{N}(F_x)$, $X= \mathcal{N}(F_{\mu}(R\cdot))$, where $\mu$ is the uniform measure on $\mathbb{S}^1$, to see that 
\begin{align}
	\label{eq: almost finished}
	\int_{\mathbb{S}^2\times \Omega} \mathcal{N}(F_x) d\sigma = \mathbb{E}[\mathcal{N}(F_{\mu})](1+o_{n\rightarrow \infty}(1)).
\end{align}
Combining \eqref{eq: semi-locality} and \eqref{eq: almost finished}, we  deduce that 
$$ \mathbb{E}[\mathcal{N}(f_n)]= \frac{n^2}{\pi R^2}  \mathbb{E}[\mathcal{N}(F_{\mu})](1+o_{n\rightarrow \infty}(1)) + O\left(\frac{n^2}{ R^2}\right).$$
Hence, thanks to Theorem \ref{Thm: NS}, we have 
$$  \mathbb{E}[\mathcal{N}(f_n)]= c_{NS}n^2 (1+o_{n\rightarrow \infty}(1))(1+o_{R\rightarrow \infty}(1)) + O\left(\frac{n^2}{R^2}\right),$$  
 and Theorem \ref{thm: main} follows by taking $R\rightarrow\infty$ sufficiently slowly compared to $n$. 
\end{proof}

\appendix

\section{Proof of auxiliary lemmas}
\label{sec:proof of standard lemmas}
We will now prove Lemma \ref{lem: bounds}.
\begin{proof}[Proof of Lemma \ref{lem: bounds}]
	Let us consider the \textquotedblleft harmonic lift\textquotedblright $h(x,t)= \exp(-n(n+1)^{1/2}t)f_n(x): \mathbb{S}^2\times [-2,2]\rightarrow \mathbb{R}$. Note that,  $\Delta h= 0$ on (say) any ball contained in the product space $\mathbb{S}^2\times [-2,2]$ . Thus, by the mean value property of harmonic functions, for all $y\in \mathbb{S}^2$, we have 
	  $$f_n(y)=h(y,0)\lesssim \vol (\tilde{B}((y,0), n^{-1}))\int_{\tilde{B}((y,0),n^{-1})} h,$$
where $\tilde{B}(\cdot)$ denotes a ball in the product space.  Using Cauchy-Schwartz and integrating over the auxiliary variable ($t$), we obtain 
 \begin{align}
 	\label{eq: A 1.1}
 	|f_n(y)| \lesssim n \left( \int_{B(y, n^{-1})} |f_n|^2 \right)^{1/2}.
 \end{align}
Applying \eqref{eq: A 1.1} (squared) to every point in $y\in B(x,R/n)$, we conclude 
$$ \sup_{B(x,R/n)} |f_n|^2\lesssim (nR)^2 \int_{B(x,10R/n)} |f_n|^2.$$

\vspace{1mm}
We now consider bounds on the the derivatives of $f_n$. Since $h$ (as above) is harmonic, we also have $D^{\alpha}h$ is harmonic for all multi-indices $|\alpha|\leq 1$ say. Therefore, by the mean-value property of harmonic functions and the Divergence Theorem, for all points $w\in \mathbb{S}^2 \times[-1,1]$ and a ball $\tilde{B}(W)= \tilde{B}(w,W)$ with $W>0$, writing $D^{\alpha}= D$, we find 
$$D h(w)= \vol (\tilde{B}(W))^{-1} \int_{\tilde{B}(W)} D h = \vol (\tilde{B}(W))^{-1} \int_{\partial \tilde{B}(W)} h\cdot n,$$
where $n$ is the outward pointing unit-norm vector. In particular, we have 
$$D h(w) \lesssim W^{-1}\sup_{\tilde{B}(2W)}|h|.$$
Taking $w= (y,0)$, $W= 1/(2n)$ and using \eqref{eq: A 1.1}, we obtain
$$|D f_n(y)|^2\lesssim n^4 \sup_{B(y,1/n)}|f_n|^2.$$
Another covering argument as above concludes the proof of Lemma \ref{lem: bounds} for the $C^1$-norm. To see Lemma \ref{lem: bounds}  for the $C^2$-norm we simply use the  mean-value property and  the Divergence Theorem with $D^{\beta}(D^{\alpha}h)$, for $|\alpha|\leq 1$ and $|\beta|\leq 1$, to obtain 
$$D^{\beta}(D^{\alpha}h)(y,1)\lesssim n^2 \sup_{B((y,1),n^{-1})} |D^{\alpha}h|\leq n^6 \sup_{B(y,1/n)}|f_n|^2,$$
and repeat the covering argument.
\end{proof}

We will now prove Lemma \ref{lem: bound on L^4-norm} following \cite[Theorem 2]{SW81}, we claim no originality. 
\begin{proof}[Proof of Lemma \ref{lem: bound on L^4-norm}]
Let us suppose that $u$ is a function which maximizes $\int_{\mathbb{S}^2} u^4 $ among all functions $u\in \mathcal{H}_n$ with $||u||_{L^2}=1$ (note that $u$ exists since $\mathcal{H}_n$ is a finite dimensional vector space). Now, 	let us consider the integral kernel of the spectral projector operator $\pi_n:L^2(\mathbb{S}^2)\rightarrow \mathcal{H}^n$ which, in light of Lemma \ref{lem: two point function}, is given by 
	$$\sum_{k=-n}^n Y_k(x)Y_k(y)= (2n+1) P_n(\langle x,y\rangle):= \varphi_n(\langle x,y\rangle),$$
	where $\langle \cdot,\cdot\rangle$ is the standard inner-product on $\mathbb{S}^2$ so that $\cos(\Theta(x,y))= \langle \cdot,\cdot\rangle$. Then, by our choice of $u$, we claim that
	\begin{align}
		\label{eq: A1}
\pi_n(u^3)(y):=	\varphi_n \star u^3 (y)=	\int_{\mathbb{S}^2} \varphi_n(\langle x,y\rangle) u(y)^3d\rho(y)= c u(x),
	\end{align}
	for some constant $c>0$. Indeed, for $f\in L^2(\mathbb{S}^2)$, we have 
	\begin{align}
		\int_{\mathbb{S}^2} f\cdot u= \int_{\mathbb{S}^2} \pi_n(f)\cdot u + \int_{\mathbb{S}^2} (f-\pi_n(f))\cdot u= \int_{\mathbb{S}^2} \pi_n(f)\cdot u  , \label{eq: A2}
		\end{align}
as $u\in \mathcal{H}_n$ and $ (f-\pi_n(f))$ is orthogonal to it.  Thus, since the integral on the  r.h.s. of \eqref{eq: A2} is maximized for $\pi_n(f)= cu$ and $u$ by definition maximizes the integral on the l.h.s. of \eqref{eq: A2}, we conclude \eqref{eq: A1}. 

\vspace{1mm} 
In order to find $c>0$ in \eqref{eq: A1}, we multiply both sides by $u(x)$ and integrate with respect to $x\in \mathbb{S}^2$. Thus, bearing in mind that $||u||_{L^2}=1$, we find
$$c= ||u||^4_{L^4}.$$
Therefore, \eqref{eq: A1} and H\"{o}lder inequality (for the $L^1$-norm) implies 
\begin{align}
	\nonumber ||u||^4_{L^4}||u||_{L^{\infty}}= 	||\varphi_n \star u^3||_{L^{\infty}} \leq 	||\varphi_n||_{L^4}||u^3||_{L^{3/4}}= 	|| 	\varphi_n||_{L^4}||u||^3_{L^4}.
	\end{align}
In particular,  we obtain the pair of bounds 
\begin{align}
\label{eq: A3}	& ||u|_{L^4}||u||_{L^{\infty}}\leq 	|| \varphi_n||_{L^4} &||u||^4_{L^4}\leq ||u||^2_{L^{\infty}}.
\end{align}
Hence, if $||u||_{L^{\infty}}\leq n^{1/3}$, then \eqref{eq: A3} implies the bound $||u||^4_{L^4}\leq n^{2/3}$. If  $||u||_{L^{\infty}}> n^{1/3}$, since a straightforward computation using Lemma \ref{lem: asymptotic at plank-scale}  implies  $|| \varphi_n||_{L^4}\lesssim n^{1/2}\log n$, \eqref{eq: A3} implies  $||u||^4_{L^4}\lesssim n^{2/3}\log n$, as required. 
\end{proof}
We will now prove Lemma \ref{lem: nodal length}:
\begin{proof}[Proof of Lemma \ref{lem: nodal length}]
	First, since spherical harmonics are restrictions of homogeneous polynomials to the sphere, by passing to polar coordinates, we may identify $f_n$ with a bi-variate trigonometric polynomial $g_n$ (say) so that  
	$$\vol (x\in \mathbb{S}^2: f_n(x)=0)\asymp \vol (x\in [-1/2,1/2]^2: g_n(x)=0),$$
	Therefore, it is enough to prove 
	\begin{align}
		\label{eq: prove}
	 \vol (x\in [-1/2,1/2]^2: g_n(x)=0)\lesssim n.
	\end{align}
Now we claim that there exists either an horizontal line $\ell_h$ (say) or a vertical line $\ell_v$ (say) such that 
\begin{align}
\label{eq: Crofton like}	 \vol (x\in [-1/2,1/2]^2: g_n(x)=0) \lesssim |\{x\in \mathcal{L}_h: g(x)=0 \}| +  |\{x\in \mathcal{L}_v: g(x)=0 \}|
	 \end{align}
 	Since the zero set of $g_n$ is an union of smooth curves, it is, in particular, rectifiable (approximable by line segments). So it is enough to show that for any line segment $\mathcal{C}$ with length $\ell$ claim \ref{eq: Crofton like} holds. Indeed, let us write $N_1(x_1)$ for the number of intersections of $\mathcal{C}$ with the horizontal line going through $x_1\in [-1/2,1/2]$ (the lower side of the square $[-1/2,1/2]^2)$ and, similarly,  $N_1(x_2)$ for the number of intersections of $\mathcal{C}$ with the vertical line going through $x_2\in [-1/2,1/2]$ (the left side of the square $[-1/2,1/2]^2)$. Then, we have 
 	$$\int N(x_1)dx_1 + \int N(x_2)dx_2 \geq P_1(\mathcal{C}) + P_2(\mathcal{C})\geq \frac{\ell}{10},$$
 	where $P_1,P_2$ are the length of the projection of $\mathcal{C}$ on the $X$ and $Y$-axis respectively, and Claim \ref{eq: Crofton like} follows. 
 	  Since $g_n$ is a bi-variate polynomial of degree at most $n$, its restriction to any vertical or horizontal line is an uni-variate polynomial of degree $n$, thus, by \eqref{eq: Crofton like}, we have 
	  $$\vol (x\in [-1/2,1/2]^2: g_n(x)=0) \lesssim n,$$
	as required. 
\end{proof}
We are now going to prove Lemma \ref{lem: continuity lemma}, the proof follows \cite[Claim 4.2]{NS}, we claim no originality. 
\begin{proof}[Proof of Lemma \ref{lem: continuity lemma}]
	Let $\{g_{k}\}_{k=1}^{\infty}\in C^1_{\star}$ be a sequence of functions converging to some $g\in C^1_{\star}(W)$ with respect to the $C^1$-topology. Since $g\in C^1_{\star}$, $g$ has finitely many nodal domains in $B(2W)$. In particular, it has finitely many nodal domains which do not touch $\partial B(W)$ and, in light of the definition of $C^1_{\star}$, there exists some $a>0$ such that 
	$$\min \dist(\Gamma, \partial B(W))>a,$$
	where the minimum  is take over all nodal domains $\Gamma$ of $g$ contained in $B(W)$.  Moreover, there exists some $b$ such that 
	$$|g|+|\nabla g|>b.$$
	
	\vspace{1mm}
	
	Now, we claim that each connected component $\Gamma(t)$ of the level set $|g|\leq t$ contains precisely one nodal domain $\Gamma$ and the $\Gamma(t)$'s are disjoint, provided we choose $t=t(a,b)$ sufficiently small.	Let's start by showing that the $\Gamma(t)$'s are disjoint. If they were not, they would meet at a point $x$ (say) where $|\nabla g(x)|=0$, since $x\in B(2W)$, this implies that $|g(x)|>b$ and thus it does not belong to $\Gamma(b/2)$.
	
	Let us now snow that each connected component $\Gamma(t)$ of the level set $|g|\leq t$ contains precisely one nodal domain $\Gamma$,  provided we choose $t=t(a,b)$ sufficiently small.  Indeed, since $g$ is continuous, taking $t$ sufficiently small depending on $a$, we may assure that 
	$$	\min \dist(\Gamma(t), \partial B(W))>a/2.$$
So all $\Gamma(t)$ as sufficiently well separated from the boundaries.  Now, let $x\in \partial\Gamma(t)$, then $|g(x)|=t$ and, taking $t<b/2$, we see that $|\nabla g(x)|>b/2$, thus moving in the direction of $\nabla g(x)$ if $g(x)=-t$ and in the direction $\nabla g(x)$ if $g(x)=t$, we will find a point (within $\Gamma(t)$ for appropriately small $t$ depending on $b$) such that $g(x)=0$.  Since $|g|+|\nabla g|>b$, there are exists some $c=c(b)>0$ such that 
	$$\min \dist (\Gamma_i,\Gamma_j)>c.$$
	By continuity of $g$, we may choose $t$ sufficiently small such that $\Gamma(t)\subset \Gamma_{+(c/2)}$, the $(c/2)$-neighborhood of $\Gamma$ ( and all this neighborhood are well within $B(W)$, provided with also choose $c$ small compared to $a$). Thus, $\Gamma(t)$ contains precisely one nodal domain. 
	
	\vspace{1mm}
	To conclude the proof we take $k$ sufficiently large so that each nodal domains of $g_k$ is contained in $\Gamma(t)$, for some $\Gamma(t)$,  which implies, as $\mathcal{N}(\cdot)$ is integer values, that 
	$$\mathcal{N}(g_k,W)=\mathcal{N}(g,W),$$
	as required. 
\end{proof}
\section{Not base-independent}
We prove that the distribution of $f_n$ as a random function on $C^0(\mathbb{S}^2)$ (say) is not base independent:
\begin{claim}
	\label{claim: not-base indepdent}
  Given and orthogonal $\{Y_k\}$ for $\mathcal{H}_n$ let $v_n= (f_n)_{\star}\mathbb{P}$ be as in section \ref{sec: convergence of random functions} with 
	$$f_n(x)= c_n \sum_{k=-n}^n a_k Y_k,$$
	where the  $a_k$'s are i.i.d. Bernulli $\pm 1$. There exists two (different) orthonormal basis $\{Y_k\}_{k=-n}^n$ and $\{\tilde{Y_k}\}_{k=-n}^n$ such that their associated pushfoward measures, $v_n$ and $\tilde{v_n}$ (say), have different distributions.  
\end{claim}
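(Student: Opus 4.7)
The plan is to distinguish the two push-forward measures via a single scalar statistic, the fourth moment of the point-evaluation at a fixed $x_0\in\mathbb{S}^2$. Since the map $g\mapsto g(x_0)^4$ is a continuous functional on $C^0(\mathbb{S}^2)$, if its integrals against $v_n$ and $\tilde v_n$ disagree then necessarily $v_n\neq\tilde v_n$.

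The first step is to compute this fourth moment in closed form for an arbitrary orthonormal basis of $\mathcal{H}_n$. Writing $f_n(x_0)=c_n\sum_k a_k Y_k(x_0)$, the elementary identity
\begin{align}
\nonumber \mathbb{E}\left[\left(\sum_k u_k a_k\right)^{\!4}\,\right]=3\left(\sum_k u_k^2\right)^{\!2}+(\mathbb{E}[a_0^4]-3)\sum_k u_k^4,
\end{align}
valid for i.i.d.\ centred unit-variance $a_k$, the value $\mathbb{E}[a_0^4]=1$ for Bernoulli $\pm 1$, and the reproducing-kernel identity $c_n^2\sum_k Y_k(x_0)^2=1$ from Lemma \ref{lem: two point function} together give
\begin{align}
\nonumber \mathbb{E}[f_n(x_0)^4]=3-\frac{2}{(2n+1)^2}\sum_{k=-n}^n Y_k(x_0)^4,
\end{align}
so everything reduces to showing that $\{Y_k\}\mapsto\sum_k Y_k(x_0)^4$ is \emph{not} invariant under orthogonal changes of basis of $\mathcal{H}_n$.

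For the second step, I would exhibit two explicit bases. Let Basis~1 be any orthonormal basis one of whose elements, say $Y_0$, is the normalised zonal harmonic at $x_0$, that is $Y_0(y)=\sqrt{2n+1}\,P_n(\langle x_0,y\rangle)$, so that $Y_0(x_0)=\sqrt{2n+1}$. Lemma \ref{lem: two point function} then yields $\sum_k Y_k(x_0)^2=2n+1=Y_0(x_0)^2$, forcing $Y_k(x_0)=0$ for all $k\neq 0$ and hence $\sum_k Y_k(x_0)^4=(2n+1)^2$. For Basis~2, rotate $Y_0$ with some other basis element $Y_1$ by forty-five degrees: set $\tilde Y_0=(Y_0+Y_1)/\sqrt{2}$, $\tilde Y_1=(Y_0-Y_1)/\sqrt{2}$, and leave the remaining elements unchanged. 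Since $Y_1(x_0)=0$, both $\tilde Y_0$ and $\tilde Y_1$ evaluate to $\sqrt{(2n+1)/2}$ at $x_0$, so $\sum_k \tilde Y_k(x_0)^4=(2n+1)^2/2$. Substituting back, Basis~1 yields $\mathbb{E}[f_n(x_0)^4]=1$ and Basis~2 yields $\mathbb{E}[\tilde f_n(x_0)^4]=2$, so indeed $v_n\neq\tilde v_n$.

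There is no serious obstacle: the whole mechanism is the inequality $\mathbb{E}[a_0^4]\neq 3$ for Bernoulli variables. The Gaussian identity $\mathbb{E}[a_0^4]=3$ is precisely what kills the basis-dependent correction term in the formula above, which is why the Gaussian model is rotation invariant; the same calculation shows, more generally, that any i.i.d.\ model with $\mathbb{E}[a_0^4]\neq 3$ fails to be base-independent.
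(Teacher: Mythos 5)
Your proof is correct, and the two bases you construct are exactly the ones in the paper's own proof: a basis adapted to the zonal harmonic at the evaluation point, and its forty-five degree rotation in the $(Y_0,Y_1)$-plane. (Your derivation of $Y_k(x_0)=0$ for $k\neq 0$ from the reproducing-kernel identity $\sum_k Y_k(x_0)^2=2n+1=Y_0(x_0)^2$ is a tidy alternative to the paper's explicit associated-Legendre formulas.) Where you genuinely diverge is in the discriminating statistic: the paper simply observes that $f_n(x_0)=a_0$ and $\tilde f_n(x_0)=(a_0+a_1)/\sqrt{2}$ have visibly different laws (two-point versus three-point support), whereas you compare fourth moments via the kurtosis identity. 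For the Bernoulli case the paper's comparison is more immediate, but your computation buys something the paper does not state: the correction term $(\mathbb{E}[a_0^4]-3)\,c_n^4\sum_k Y_k(x_0)^4$ shows that \emph{any} i.i.d.\ coefficient law with finite fourth moment and kurtosis different from $3$ fails to be base-independent, and simultaneously explains why the Gaussian model is the rotation-invariant one. Both arguments are complete; note only that your moment comparison implicitly uses that equal push-forward measures on $C^0(\mathbb{S}^2)$ have equal one-point marginals at $x_0$, which is the same (trivial) reduction the paper makes when it passes to finite-dimensional distributions.
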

\begin{proof}
	Suppose, by contradiction that $v_n=\tilde{v_n}$, in the sense of distributions. Then, their finite dimensional distributions also agree, in particular, taking $x=(0,0)$ to be the north pole, in polar coordinates on the sphere, we should have 
	\begin{align}
	\label{eq: to disprove}
	\mathbb{P}(f_n(x)\leq t)= \mathbb{P}(\tilde{f_n}(x)\leq t),
	\end{align}
for all $t\in \mathbb{R}$. Let us now take $Y_k(\theta,\psi)= \exp(ik\psi)P_{n}^k(\cos \theta)$, where $(\theta,\psi)$ are polar coordinates on $\mathbb{S}^2$ and
$$P_n^k(x)=\left(\frac{n(n-k)!}{(n+k)!}\right)^{1/2}\frac{(-1)^k}{2^{k}k!}(1-x^2)^{k/2}\frac{d^{n+k}}{dx^{n+k}}(x^2-1)^{n},$$
are the (normalized) associated Legendre polynomials.
Moreover,  let us take 
$$   \tilde{Y_k}= Y_k \hspace{3mm} k\neq {0,1}, \hspace{5mm}  \tilde{Y_0}=\frac{1}{\sqrt{2}}(Y_0+ Y_1), \hspace{5mm} \tilde{Y_1}=\frac{1}{\sqrt{2}}(Y_0- Y_1).$$
By definition, at the north pole $Y_k(x)=0$ for all $k\neq 0$ and $Y_0(x)=\sqrt{2n+1}$, thus, bearing in mind the normalization constant in the definition of $f_n$, we obtain 
$$	\mathbb{P}(f_n(x)\leq t)=\mathbb{P}(a_0\leq t),$$
and 
$$\mathbb{P}(\tilde{f_n}(x)\leq t) = \mathbb{P}(2^{-1/2}(a_0+ a_1)\leq t),$$
which contradicts \eqref{eq: to disprove}.
\end{proof}
\section*{Acknowledgements}
We would like to thank Mikhail Sodin for suggesting the question explored in this script and for many useful discussions. This work was supported by the ISF Grant 1903/18 and the BSF Start up Grant no. 20183

\bibliographystyle{siam}
\bibliography{UnvNS}
	\end{document}